\DeclareMathOperator{\h}{\rm hes}
\DeclareMathOperator{\Hes}{\rm Hes}
\DeclareMathOperator{\Ric}{\rm Ric}
\DeclareMathOperator{\tr}{\rm tr}
\DeclareMathOperator{\Id}{\rm Id}
\newtheorem{theorem}{Theorem}[section]
\newtheorem{lemma}[theorem]{Lemma}
\newtheorem{remark}[theorem]{Remark}
\begin{document}
\title[Half conformally flat gradient Ricci almost solitons]
{Half conformally flat gradient Ricci almost solitons}
\author[Brozos-V\'azquez, Garc\'ia-R\'io, Valle-Regueiro]{M. Brozos-V\'azquez, E. Garc\'ia-R\'io, 
X. Valle-Regueiro}
\address{MBV: Departmento de Matem\'aticas, Escola Polit\'ecnica Superior, Universidade da Coru\~na, Spain}
\email{miguel.brozos.vazquez@udc.gal}
\address{EGR-XVR: Faculty of Mathematics,
University of Santiago de Compostela,
15782 Santiago de Compostela, Spain}
\email{eduardo.garcia.rio@usc.es $\,\,$ javier.valle@usc.es}
\thanks{Supported by projects GRC2013-045, MTM2013-41335-P and EM2014/009 with FEDER funds (Spain).}
\subjclass[2010]{53C21, 53B30, 53C24, 53C44}
\date{}
\keywords{Gradient Ricci almost soliton, warped product, Walker manifold, half conformally flat}

\begin{abstract}
The local structure of half conformally flat gradient Ricci almost solitons is investigated, showing that they are locally conformally flat in a neighborhood of any point where the gradient of the potential function is non-null. In opposition, if the gradient of the potential function is null, then the soliton is a steady traceless $\kappa$-Einstein soliton and is realized on the cotangent bundle of an affine surface.
\end{abstract}

\maketitle

\section{Introduction}
A triple $(M,g,f)$, where $(M,g)$ is a pseudo-Riemannian manifold and $f$ is a smooth function on $M$, is said to be a \emph{gradient Ricci soliton} if the following equation is satisfied:
\begin{equation}\label{eq:RicciSoliton}
\operatorname{Hes}_f+\rho=\lambda\, g\,
\end{equation}
for some $\lambda\in\mathbb{R}$, where $\rho$ denotes the Ricci tensor and
$\operatorname{Hes}_f$ denotes the Hessian of $f$.

The special significance of gradient Ricci solitons comes from the analysis of the fixed points of the Ricci flow: $\frac{\partial}{\partial t}g(t)=-2\rho(t)$. Ricci flat metrics  are genuine fixed points of the flow. Moreover, if $(M,g_0)$ is Einstein with $\rho_0=\frac{\tau_0}{\operatorname{dim}M}g_0$, then $g(t)=(1-2\frac{\tau_0}{dim M})g_0$, which shows that $g_0$ is a fixed point of the Ricci flow modulo homotheties. Generalizing this property, gradient Ricci solitons correspond to self-similar solutions of the Ricci flow. They are ancient solutions in the shrinking case ($\lambda>0$), eternal solutions in the steady case ($\lambda=0$), and immortal solutions in the expanding case ($\lambda<0$). 

Recently, a generalization of Equation \eqref{eq:RicciSoliton} has been considered in \cite{PRRS}, allowing $\lambda$ to be a smooth function on $M$. Thus $(M,g,f)$ is said to be a \emph{gradient Ricci almost soliton} if  Equation~\eqref{eq:RicciSoliton} is satisfied for some $\lambda\in\mathcal{C}^\infty(M)$. Since gradient Ricci almost solitons contain gradient Ricci solitons as a particular case, we say that the gradient Ricci almost soliton is \emph{proper} if the function $\lambda$ is non-constant. 
 
It is important to emphasize that, beyond being a generalization of Ricci solitons, some proper gradient Ricci almost solitons correspond to self-similar solutions of some geometric flows. The \emph{Ricci-Bourguignon flow} is given by the equation
\[
\frac{\partial}{\partial t}g(t)=-2(\rho(t)-\kappa\tau(t)\, g(t)),
\]
where $\kappa\in\mathbb{R}$ and $\tau$ denotes the scalar curvature. This flow can be seen as an interpolation between the Ricci flow and the Yamabe flow, which corresponds to the equation
$\frac{\partial}{\partial t}g(t)=-\tau(t)\, g(t)$.
We refer to  \cite{CCDMM} for a broad exposition on the Ricci-Bourguignon flow. 
Following \cite{CCDMM}, the self-similar solutions of this flow are called \emph{$\kappa$-Einstein solitons} and correspond to the equation
\[
 \operatorname{Hes}_f+\rho=(\kappa\,\tau+\mu)\, g\,
\]
for some $\kappa, \mu\in\mathbb{R}$. Hence, they are a special family of gradient Ricci almost solitons with soliton function $\lambda=\kappa\,\tau+\mu$ (see  \cite{CMMR, CaMa} for more information on $\kappa$-Einstein solitons). The steady case ($\mu=0$) will play a relevant role in our study.

Gradient Ricci almost solitons exhibit some similarities but also some striking differences when comparing with usual Ricci solitons. For example, no positive-definite K\"ahler manifold admits proper gradient Ricci almost solitons \cite{Ma} in contrast with the gradient Ricci soliton case. Proper gradient Ricci almost solitons are irreducible and, moreover, they are of constant non-zero curvature in the homogeneous Riemannian setting \cite{CL-FL-GR-VL}.

Classifying gradient Ricci almost solitons under curvature conditions is a natural problem which certainly may help in understanding the corresponding flows. Among the different curvature conditions, local conformal flatness is the most natural one, since  the Ricci tensor completely determines the curvature in that situation. The analysis carried out in \cite{CaMa} gives a classification of locally conformally flat gradient $\kappa$-Einstein solitons.

The purpose of this paper is to analyze half  conformally flat (i.e. self-dual or anti-self-dual) four-dimensional gradient Ricci almost solitons. The main tasks are to show the existence of proper examples and to describe their underline structure.
The results are obtained as a consequence of the almost soliton equation \eqref{eq:RicciSoliton}, where  geometric information of different nature is encoded: on the one hand information related to the curvature of $(M,g)$ is given through the Ricci tensor; on the other hand the level hypersurfaces of the potential function $f$ are involved by means of their second fundamental form. As a result, two cases, which are different in nature, are consider separately. The first one corresponds to non-degenerate level hypersurfaces ($\|\nabla f\|\neq 0$), whereas the second one corresponds to degenerate level hypersurfaces  ($\|\nabla f\|=0$) and gives rise to the socalled {\it isotropic} solitons.

Our main result shows that any half conformally flat four-dimensional gradient Ricci almost soliton is locally conformally flat if $\|\nabla f\|\neq 0$. However, the isotropic case ($\|\nabla f\|=0$) allows the existence of strictly half conformally flat proper gradient Ricci almost solitons, i.e. examples which are self-dual but not locally conformally flat. All of them are realized as the cotangent bundle of an affine surface equipped with a modified Riemannian extension. Moreover, they are steady traceless $\kappa$-Einstein solitons, i.e. $\mu=0$ and $\kappa=\frac{1}{4}$.

\begin{theorem}\label{main}
Let $(M,g,f)$ be a four-dimensional half conformally flat proper gradient Ricci almost soliton. 
\begin{enumerate}
\item
If $\|\nabla f\|\neq 0$, then $(M,g)$ is locally isometric to a warped product of the form $I\times_\varphi N$, where $I\subset \mathbb{R}$ and $N$ is of constant sectional curvature. Furthermore, $(M,g)$ is locally conformally flat. 
\item
If $\|\nabla f\|=0$, then $(M,g)$ is locally isometric to the cotangent bundle $T^*\Sigma$ of an affine surface $(\Sigma,D)$ equipped with a modified Riemannian extension of the form $g = \iota T\circ \iota\Id +g_D +\pi^*\Phi$, where $T$ is a $(1,1)$-tensor field and $\Phi$ is a symmetric $(0,2)$-tensor field on $\Sigma$. 

The potential function satisfies $f=\pi^*\hat f$ for some smooth function $\hat f$ on $\Sigma$ and is related with the soliton function $\lambda$ by $\lambda=\frac32 C e^{-f}$ for a constant $C$.
Moreover, $T$ is given by $T=C e^{-\hat f}\Id$ and $\Phi$ is given by $\Phi=\frac{ 2}{C}e^{\hat f}(\operatorname{Hes}^D_{\hat f}+2\rho^D_{sym})$.
\end{enumerate}
\end{theorem}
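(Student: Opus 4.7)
The plan is to analyze the two cases separately, since they have fundamentally different geometric natures.

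For part (1), I would start from the half conformal flatness condition, say $W^+=0$ (the opposite case follows by reversing orientation). Contracting the second Bianchi identity relates the divergence of the Weyl tensor to the Cotton tensor, and combining this with $W^+=0$ produces a self-duality condition on the Cotton tensor. The soliton equation $\Hes_f+\rho=\lambda g$ lets one compute $\diver\rho$ and hence control the Cotton tensor in terms of $f$ and $\lambda$. The crucial algebraic step is to show that $\nabla f$ is an eigenvector of the Ricci operator. Once this is established, the level hypersurfaces of $f$ are shown to be totally umbilical with suitable constancy of the remaining Ricci eigenvalues along each level set, which gives the warped product structure $I\times_\varphi N$ by a standard foliation argument. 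Imposing $W^+=0$ on this warped product with three-dimensional fiber then forces $N$ to have constant sectional curvature; local conformal flatness follows at once, since a warped product of an interval with a space of constant curvature has vanishing Weyl tensor.

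For part (2), the starting point is that $\nabla f$ is a nontrivial null vector field. Differentiating $\|\nabla f\|^2=0$ and using the soliton equation gives
\[
0=\tfrac{1}{2}\nabla\|\nabla f\|^2=\Hes_f(\nabla f,\cdot)=(\lambda g-\rho)(\nabla f,\cdot),
\]
so $\rho(\nabla f)=\lambda\nabla f$. Iterating this type of computation and exploiting half conformal flatness, one aims to show that $\spanned\{\nabla f\}$ extends to a two-dimensional parallel null distribution, thereby placing $g$ in Walker normal form. Since the parallel null plane lies in $\ker df$, the potential function descends to a function $\hat f$ on the quotient surface $\Sigma$. Identifying the Walker manifold with a modified Riemannian extension over $(\Sigma,D)$ for a suitable affine connection $D$, and expanding the soliton equation component by component in these coordinates, yields an overdetermined PDE system whose integrability forces $T=Ce^{-\hat f}\Id$ and $\Phi=\frac{2}{C}e^{\hat f}(\Hes^D_{\hat f}+2\rho^D_{sym})$. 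The relation $\lambda=\frac{3}{2}Ce^{-f}$ is read off from the trace of the soliton equation, showing in particular that the soliton is steady traceless $\kappa$-Einstein with $\kappa=\frac{1}{4}$.

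The main obstacle I anticipate is in case (2): recognizing the Walker structure as a modified Riemannian extension over an affine surface. Once a parallel null distribution is located, the Walker normal form of $g$ is more or less automatic, but distilling from it the intrinsic affine connection $D$ on $\Sigma$ and verifying that the remaining components of $g$ organize themselves as $\iota T\circ\iota\Id+g_D+\pi^*\Phi$ requires a careful choice of coordinates and a correct interpretation of the transition terms as Christoffel symbols on the base. Extracting the precise formulas for $T$ and $\Phi$ then demands detailed bookkeeping of the mixed and base components of the soliton equation, and this is where the half conformal flatness hypothesis must enter nontrivially to restrict the otherwise much broader class of Walker metrics compatible with equation \eqref{eq:RicciSoliton}.
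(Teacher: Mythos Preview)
Your outline matches the paper's approach closely in both cases. Two points in part~(2) deserve sharpening, since the paper handles them explicitly and you gloss over them.

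First, once you have produced the parallel null $2$-plane $\mathfrak{D}$, the Walker structure fixes an orientation on $M$ (the one for which $\mathfrak{D}$ is self-dual). Half conformal flatness then splits into two genuinely different subcases. The paper observes, citing \cite{DR-GR-VL}, that an \emph{anti}-self-dual Walker metric necessarily has $\tau=0$; combined with $\lambda=\tfrac{1}{4}\tau$ (which you will have obtained at the Walker-structure stage), this forces $\lambda\equiv 0$, contradicting properness. So only the self-dual orientation survives, and this is not a symmetric ``reverse orientation'' situation as in part~(1).

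Second, the passage from ``self-dual Walker'' to ``modified Riemannian extension of the stated form'' is not a matter of choosing good coordinates: the paper invokes a classification theorem (Theorem~\ref{thm-7.2}, from \cite{CL-GR-G-VL}) asserting that every self-dual Walker $4$-manifold is locally $T^*\Sigma$ with metric
\[
g=\iota X(\iota\Id\circ\iota\Id)+\iota T\circ\iota\Id+g_D+\pi^*\Phi,
\]
which contains an extra cubic-in-fiber term governed by a vector field $X$ on $\Sigma$. The soliton equation is then expanded in induced coordinates to show first that $f=\iota\xi+\pi^*\hat f$ is affine in the fiber, and then, by examining the $(\partial_{x^i},\partial_{x_{j'}})$-components as polynomials in $x_{1'},x_{2'}$, that $X=0$ and $\xi=0$. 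Only after these eliminations does one land in the class $\iota T\circ\iota\Id+g_D+\pi^*\Phi$ with $f=\pi^*\hat f$, and the remaining components then yield $T=Ce^{-\hat f}\Id$ and $\Phi=\tfrac{2}{C}e^{\hat f}(\Hes^D_{\hat f}+2\rho^D_{sym})$ exactly as you anticipate. Your instinct that ``half conformal flatness must enter nontrivially'' at this stage is correct; it enters via this classification theorem rather than through a direct coordinate computation.
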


The paper is organized as follows. Some basic consequences of the gradient Ricci almost soliton equation are discussed in $\S$\ref{se:2-1}. Half conformal flatness is discussed at the purely algebraic level in $\S$\ref{se:2-2}. Walker metrics and modified Riemannian extensions are introduced in $\S$\ref{se:2-3} and used in Section~\ref{se:3} to prove Theorem \ref{main}. The analysis of the non-isotropic case is carried out in $\S$\ref{se:3-1} to prove Theorem~\ref{main}-(1), while the isotropic case is treated in $\S$\ref{se:3-2} to prove Theorem~\ref{main}-(2).

\section{Preliminaries}

Let $(M,g)$ be a pseudo-Riemannian manifold with Levi-Civita connection $\nabla$ and curvature tensor $\mathcal{R}(X,Y)=\nabla_{[X,Y]}-[\nabla_X,\nabla_Y]$. 
Let $\rho$ and $\tau$ denote the Ricci tensor and the scalar curvature given by
$\rho(X,Y)=\tr\{Z\mapsto\mathcal{R}(X,Z)Y\}$ and $\tau=\tr\{\Ric\}$ respectively, where $\Ric$ denotes the Ricci operator defined by $g(\Ric X,Y)=\rho(X,Y)$. Let $\operatorname{Hes}_f$ denote the Hessian tensor defined by $\operatorname{Hes}_f(X,Y)=(\nabla_X df)(Y)=XY(f)-(\nabla_XY)(f)$. We begin by exploring the first consequences of the almost soliton equation \eqref{eq:RicciSoliton}.

\subsection{Gradient Ricci almost solitons: consequences of the equation}\label{se:2-1}

Let $(M,g,f)$ be a gradient Ricci almost soliton defined by Equation \eqref{eq:RicciSoliton}, where $\lambda$ is a function on $M$. Tracing and taking divergences on  \eqref{eq:RicciSoliton}, one gets the following relations which extend well-known identities from the soliton case. We refer to  \cite{BBR-2014, BBR--2012, Barros-Ribeiro12, PRRS} for a detailed exposition.

\begin{lemma}\label{le:2-1}
Let $(M,g,f)$ be a gradient Ricci almost soliton. Then
\begin{enumerate}
\item[(1)] $\Delta f + \tau = n\lambda$ ,
\item[(2)] $\nabla\Delta f + \nabla\tau = \nabla\lambda n$,
\item[(3)] $\nabla \Delta f+ \operatorname{Ric}(\nabla f)+\frac{1}{2}\nabla\tau = \nabla \lambda$,
\item[(4)] $\nabla\tau =2\operatorname{Ric}(\nabla f)+2(n-1)\nabla\lambda$,
\item[(5)] $\mathcal{R}(X,Y,Z, \nabla f)
=d\lambda(X)g(Y,Z)- d\lambda(Y)g(X,Z)\\
\phantom{R(X,Y,\nabla f,Z)=}\,\,
-(\nabla_X \rho)(Y,Z)+(\nabla_Y \rho)(X,Z)$.
\end{enumerate}
\end{lemma}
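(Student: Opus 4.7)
The plan is to obtain each of the five identities from the almost soliton equation $\operatorname{Hes}_f+\rho=\lambda g$ by a short sequence of standard tensorial operations: tracing, differentiating, and invoking the twice-contracted second Bianchi identity together with the Ricci commutation formula.

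For (1), I would simply take the $g$-trace of \eqref{eq:RicciSoliton}, using $\operatorname{tr}\operatorname{Hes}_f=\Delta f$, $\operatorname{tr}\rho=\tau$ and $\operatorname{tr}(\lambda g)=n\lambda$. Identity (2) is then the gradient of (1). For (3) I would instead take the divergence of the soliton equation, where two auxiliary facts enter: the commutation formula $\operatorname{div}(\operatorname{Hes}_f)=\nabla\Delta f+\operatorname{Ric}(\nabla f)$, which comes from the Ricci identity applied to the $1$-form $df$, and the twice-contracted second Bianchi identity $\operatorname{div}\rho=\tfrac12\nabla\tau$. Combined with $\operatorname{div}(\lambda g)=\nabla\lambda$, these give (3). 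Identity (4) is then a purely algebraic consequence of (2) and (3): eliminating $\nabla\Delta f$ between the two equations isolates $\nabla\tau$ in the stated form.

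Identity (5) requires going one order higher. I would differentiate the soliton equation covariantly in two directions $X$ and $Y$, obtaining $(\nabla_X\operatorname{Hes}_f)(Y,Z)+(\nabla_X\rho)(Y,Z)=d\lambda(X)g(Y,Z)$ and the analogous expression with $X$ and $Y$ swapped. Subtracting the two, the key input is the Ricci commutation formula for the Hessian, $(\nabla_X\operatorname{Hes}_f)(Y,Z)-(\nabla_Y\operatorname{Hes}_f)(X,Z)=\mathcal{R}(X,Y,Z,\nabla f)$, which follows from applying the Ricci identity to $df$ together with $df(V)=g(\nabla f,V)$, read off under the curvature sign convention $\mathcal{R}(X,Y)=\nabla_{[X,Y]}-[\nabla_X,\nabla_Y]$ fixed at the start of the section. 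Substituting this into the difference yields exactly (5).

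I do not anticipate any real obstacle; each identity is a tensorial manipulation that mirrors its gradient Ricci soliton counterpart, the only genuine difference being that $\nabla\lambda$ is no longer assumed to vanish. The main point of attention is to keep the curvature, divergence and Hessian sign conventions mutually consistent with those declared in $\S$\ref{se:2-1}.
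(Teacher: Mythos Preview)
Your proposal is correct and follows exactly the route the paper indicates: the paper does not supply a detailed argument for Lemma~\ref{le:2-1} but simply remarks that the identities are obtained by ``tracing and taking divergences'' of \eqref{eq:RicciSoliton} and refers to \cite{BBR-2014, BBR--2012, Barros-Ribeiro12, PRRS} for details. Your outline---trace for (1), gradient for (2), divergence together with $\operatorname{div}\operatorname{Hes}_f=\nabla\Delta f+\operatorname{Ric}(\nabla f)$ and $\operatorname{div}\rho=\tfrac12\nabla\tau$ for (3), algebraic elimination for (4), and antisymmetrization of the covariant derivative of \eqref{eq:RicciSoliton} via the Ricci identity for (5)---is precisely the standard derivation, and the sign checks under the convention $\mathcal{R}(X,Y)=\nabla_{[X,Y]}-[\nabla_X,\nabla_Y]$ come out as stated.
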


Let $\mathcal{W}$ be the  Weyl conformal curvature tensor
$$
\begin{array}{rcl}
\mathcal{W}(X,Y,Z,T)&=&\mathcal{R}(X,Y,Z,T)\\
\noalign{\medskip}
&&
+\frac{\tau}{(n-1)(n-2)}\{g(X,Z)g(Y,T)-g(X,T)g(Y,Z)\}\\
\noalign{\medskip}
     & &+\frac{1}{(n-2)}\{ \rho(X,T)g(Y,Z)-\rho(X,Z)g(Y,T)\\
     \noalign{\medskip}
     &&\phantom{+\frac{1}{(n-2)}}
     \,+\rho(Y,Z)g(X,T)-\rho(Y,T)g(X,Z)\},
\end{array}
$$
and let $C$ denote the Cotton tensor
$$
\begin{array}{rcl}
C(X,Y,Z)&=&(\nabla_X \rho)(Y,Z)- (\nabla_Y \rho)(X,Z)\\
\noalign{\medskip}
&& -\displaystyle\frac{1}{2n-2}\left\{ d\tau(X)g(Y,Z)-d\tau(Y)g(X,Z)\right\}.
\end{array}
$$
Now, an immediate application of Lemma~\ref{le:2-1}-- (4) and Lemma~\ref{le:2-1}--(5) gives the following expression for the Weyl conformal tensor. 

\begin{lemma}\label{le:2-2}
Let $(M,g,f)$ be a gradient Ricci almost soliton. Then 
$$	\begin{array}{rcl}
\mathcal{W}(X,Y,Z,\nabla f)&=&-C(X,Y,Z)\\
\noalign{\medskip}
&&
		+\frac{\tau}{(n-1)(n-2)}\{g(X,Z)g(Y,\nabla f)-g(X,\nabla f)g(Y,Z)\}\\
		\noalign{\medskip}
		&&		
		+\frac{1}{(n-2)}\{\rho(Y,Z)g(X,\nabla f) -\rho(X,Z)g(Y,\nabla f)\}\\	
		\noalign{\medskip}
		&&		
		+\frac{1}{(n-1)(n-2)}\{\rho(X,\nabla f)g(Y,Z)-\rho(Y,\nabla f)g(X,Z)\}.
		\end{array}
		$$
\end{lemma}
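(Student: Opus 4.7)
The plan is to obtain the stated formula by a direct substitution into the definition of $\mathcal{W}$, using the two identities from Lemma~\ref{le:2-1} that involve $\nabla f$. The computation is essentially algebraic bookkeeping, and no additional geometric input beyond Lemma~\ref{le:2-1} should be required.

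First I would specialize the definition of the Weyl tensor by taking $T=\nabla f$, so that every Ricci term that appears is either of the form $\rho(X,\nabla f)$, $\rho(Y,\nabla f)$, $\rho(X,Z)$ or $\rho(Y,Z)$, and every metric term either pairs two of $X,Y,Z$ or pairs one of them with $\nabla f$. Then I would replace the curvature term $\mathcal{R}(X,Y,Z,\nabla f)$ by the expression provided by Lemma~\ref{le:2-1}--(5), which introduces the covariant derivatives $(\nabla_X\rho)(Y,Z)-(\nabla_Y\rho)(X,Z)$ together with two terms in $d\lambda$.

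Next I would recognize the combination of covariant derivatives of the Ricci tensor as the Cotton tensor $C(X,Y,Z)$ modulo the $d\tau$ correction prescribed by its definition, writing
\[
(\nabla_Y\rho)(X,Z)-(\nabla_X\rho)(Y,Z)=-C(X,Y,Z)-\tfrac{1}{2n-2}\{d\tau(X)g(Y,Z)-d\tau(Y)g(X,Z)\}.
\]
At this stage I still have the $d\lambda$ terms left over from Lemma~\ref{le:2-1}--(5) and the $d\tau$ terms just produced. The key simplification is to substitute Lemma~\ref{le:2-1}--(4) in the form $d\tau(X)=2\rho(X,\nabla f)+2(n-1)d\lambda(X)$; then $\tfrac{1}{2n-2}d\tau(X)=\tfrac{1}{n-1}\rho(X,\nabla f)+d\lambda(X)$, and the $d\lambda$ contributions cancel exactly, leaving only additional Ricci--metric contractions.

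Finally I would collect coefficients. The terms with $\rho(X,\nabla f)g(Y,Z)$ and $\rho(Y,\nabla f)g(X,Z)$ pick up $-\tfrac{1}{n-1}+\tfrac{1}{n-2}=\tfrac{1}{(n-1)(n-2)}$ from combining the $d\tau$ replacement with the remaining Ricci terms of the Weyl tensor, which produces exactly the last line of the claimed formula. The terms $\rho(Y,Z)g(X,\nabla f)-\rho(X,Z)g(Y,\nabla f)$ coming directly from the Weyl definition stay with coefficient $\tfrac{1}{n-2}$, and the scalar curvature block is unchanged. I do not anticipate any substantive obstacle; the only place where a sign or coefficient error could slip in is the cancellation of the $d\lambda$ contributions and the recombination of the $\rho(\cdot,\nabla f)g(\cdot,\cdot)$ terms, so I would verify those coefficients carefully.
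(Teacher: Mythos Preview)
Your proposal is correct and follows exactly the route the paper indicates: the paper's proof consists solely of the sentence ``an immediate application of Lemma~\ref{le:2-1}--(4) and Lemma~\ref{le:2-1}--(5)'', and your plan---substitute $T=\nabla f$ in the definition of $\mathcal{W}$, replace $\mathcal{R}(X,Y,Z,\nabla f)$ via Lemma~\ref{le:2-1}--(5), rewrite the $\nabla\rho$ terms as $-C(X,Y,Z)$ plus a $d\tau$ correction, then use Lemma~\ref{le:2-1}--(4) to convert $d\tau$ into $\rho(\,\cdot\,,\nabla f)$ and cancel the $d\lambda$ contributions---is precisely that application spelled out. The coefficient bookkeeping you describe (the $-\tfrac{1}{n-1}+\tfrac{1}{n-2}=\tfrac{1}{(n-1)(n-2)}$ recombination) is correct.
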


\subsection{Half conformally flat structure of four-dimensional manifolds}\label{se:2-2}
We work at the purely algebraic level. Let $(V,\langle\,\cdot\,,\,\cdot\,\rangle)$ be an inner-product vector space and let 
$\ll\,\cdot\,,\,\cdot\,\gg$ be the induced inner product on the space of two-forms $\Lambda^2(V)$. For a given orientation
$vol_V$, the Hodge star operator $\ast:\Lambda^2(V)\rightarrow\Lambda^2(V)$ given by $\alpha\wedge\ast\beta =\ll\alpha,\beta\gg vol_V$ satisfies $\ast^2=\Id$ and induces a decomposition  $\Lambda^2(V)=\Lambda^2_+\oplus\Lambda^2_-$, where 
$\Lambda^2_+ =\{\alpha\in \Lambda^2: \ast \alpha=\alpha \}$ and $\Lambda^2_- =\{\alpha\in \Lambda^2: \ast \alpha=-\alpha \}$. $\Lambda^2_+$ is the space of self-dual and $\Lambda^2_-$ is the space of anti-self-dual two-forms.
Moreover, a change of orientation on $(V,\langle\,\cdot\,,\,\cdot\,\rangle)$ reverses the roles of $\Lambda^2_+$ and $\Lambda^2_-$.

Any algebraic curvature tensor $\mathcal{R}$ on $(V,\langle\,\cdot\,,\,\cdot\,\rangle)$ (i.e., a $(0,4)$-tensor on $V$ satisfying the symmetries of the curvature tensor) can be naturally considered as an endomorphism $\mathcal{R}:\Lambda^2(V)\to \Lambda^2(V)$. Let $\mathcal{W}:\Lambda^2(V)\to \Lambda^2(V)$ be the corresponding endomorphism associated to the Weyl conformal tensor. $\mathcal{W}$ decomposes under the action of $SO(V,\langle\,\cdot\,,\,\cdot\,\rangle)$ as $\mathcal{W}=\mathcal{W}_+\oplus \mathcal{W}_-$, where
$\mathcal{W}_+=\frac{1}{2}(\mathcal{W}+\ast \mathcal{W})$ is the self-dual and $\mathcal{W}_-=\frac{1}{2}(\mathcal{W}-\ast \mathcal{W})$ is the anti-self-dual Weyl curvature tensor. 

An algebraic curvature tensor $\mathcal{R}$ is said to be \emph{conformally flat} if $\mathcal{W}=0$ and $\mathcal{R}$ is said to be \emph{half conformally flat} if it is either \emph{self-dual} (i.e., $\mathcal{W}_-=0$) or \emph{anti-self-dual} (i.e., $\mathcal{W}_+=0$).

The following algebraic characterization of self-dual algebraic curvature tensors will be used in the proof of Theorem \ref{main}.

\begin{lemma}\label{lema:autodualidade}
Let $(V,\langle\,\cdot\,,\,\cdot\,\rangle)$ be an oriented four-dimensional inner product space of neutral signature.
\begin{enumerate}
\item[(i)]
An algebraic curvature tensor $\mathcal{R}$ is self-dual if and only if for any positively oriented orthonormal basis $\{e_1,e_2,e_3,e_4\}$ 
$$
\mathcal{W}(e_1,e_i,x,y)=\sigma_{ijk}\epsilon_j \epsilon_k \mathcal{W}(e_j,e_k,x,y),\quad \text{for any }\,\,  x,y\in V,
$$
for $i,j,k\in \{2,3,4\}$  and where $\sigma_{ijk}$  denotes the signature of the corresponding permutation.
\item[(ii)] 
An algebraic curvature tensor $\mathcal{R}$ is self-dual if and only if for a positively oriented pseudo-orthonormal basis
$\{t,u,v,w\}$ (i.e., the non-zero inner products are $\langle t,v\rangle=\langle u,w\rangle=1$)
and for every $x,y\in V$,
$$
\qquad \quad\mathcal{W}(t, v, x,y)=\mathcal{W}(u,w,x,y),\quad
\mathcal{W}(t,w,x,y)=0,\,\, 
\mathcal{W}(u,v,x,y)=0.
$$
\end{enumerate}
\end{lemma}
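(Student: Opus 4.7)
The plan is to prove both parts by exhibiting an explicit basis of the three-dimensional anti-self-dual space $\Lambda^2_-(V)$ in each frame setup. The pair-exchange symmetry of $\mathcal{W}$ allows us to regard it as a symmetric bilinear form on $\Lambda^2(V)$, and self-duality ($\mathcal{W}_-=0$) is equivalent to the vanishing $\mathcal{W}(\eta,x\wedge y)=0$ for every $\eta\in\Lambda^2_-(V)$ and every $x,y\in V$. Hence the task reduces to choosing three convenient generators of $\Lambda^2_-(V)$ in each frame and reading off the resulting conditions.

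For (i), I compute the Hodge star on the orthonormal 2-form basis. With $\epsilon_i=\langle e_i,e_i\rangle$, neutral signature forces $\epsilon_1\epsilon_2\epsilon_3\epsilon_4=1$ and hence $\ast^2=\Id$. The defining identity $\alpha\wedge\ast\beta=\langle\!\langle\alpha,\beta\rangle\!\rangle\,\mathrm{vol}_V$ then yields
\[
\ast(e^1\wedge e^i)=\sigma_{ijk}\,\epsilon_j\epsilon_k\,e^j\wedge e^k,\qquad \{j,k\}=\{2,3,4\}\setminus\{i\},
\]
and dually $\ast(e^j\wedge e^k)=\sigma_{ijk}\epsilon_j\epsilon_k\,e^1\wedge e^i$. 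The three 2-forms $\omega_i:=e^1\wedge e^i-\sigma_{ijk}\epsilon_j\epsilon_k\,e^j\wedge e^k$, $i=2,3,4$, are linearly independent and satisfy $\ast\omega_i=-\omega_i$, so they form a basis of $\Lambda^2_-(V)$. The condition $\mathcal{W}(\omega_i,x\wedge y)=0$ for all $x,y\in V$ is precisely the identity stated in (i).

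For (ii), I pass to an orthonormal frame via $e_1=(t+v)/\sqrt{2}$, $e_2=(u+w)/\sqrt{2}$, $e_3=(t-v)/\sqrt{2}$, $e_4=(u-w)/\sqrt{2}$ (for which $\epsilon_1=\epsilon_2=1$, $\epsilon_3=\epsilon_4=-1$), expand the null-frame 2-forms in the $\{e^i\wedge e^j\}$ basis, and apply the formula from (i). Upon fixing the orientation so that $\{t,u,v,w\}$ is positively oriented, a direct calculation gives
\[
\ast(t\wedge u)=t\wedge u,\quad \ast(v\wedge w)=v\wedge w,\quad \ast(t\wedge v)=u\wedge w,
\]
together with $\ast(t\wedge w)=-t\wedge w$ and $\ast(u\wedge v)=-u\wedge v$. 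Consequently $\{t\wedge w,\ u\wedge v,\ t\wedge v-u\wedge w\}$ is a basis of $\Lambda^2_-(V)$, and the three equations of (ii) are exactly the vanishing of $\mathcal{W}$ against these generators. The only substantive obstacle throughout is the sign bookkeeping for $\ast$ in neutral signature: once the $\epsilon_j\epsilon_k$ factors and the relative orientation of the null and orthonormal frames are tracked carefully, the argument reduces to linear algebra on $\Lambda^2(V)$.
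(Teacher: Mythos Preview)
Your argument is correct and follows essentially the same route as the paper: in both cases one writes down an explicit basis of $\Lambda^2_-(V)$ and reads off self-duality as the vanishing of $\mathcal{W}$ on those generators. The only cosmetic difference is that for (ii) you compute the Hodge star directly on the null two-forms, whereas the paper passes to an orthonormal basis built from $\{t,u,v,w\}$ and invokes (i); the content is the same.
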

\begin{proof}
Let $\{e_1,e_2,e_3,e_4\}$ be an orthonormal basis of $(V,\langle\,\cdot\,,\,\cdot\,\rangle)$ such that $vol_V=e^1\wedge e^2\wedge e^3\wedge e^4$. Then
\[
\Lambda^2_{\pm}=\operatorname{span}\{e^1\wedge e^2\pm \epsilon_3\epsilon_4 e^3\wedge e^4,e^1\wedge e^3\mp \epsilon_2\epsilon_4 e^2\wedge e^4,e^1\wedge e^4\pm \epsilon_2\epsilon_3 e^2\wedge e^3\},
\]
where $\epsilon_i=\langle e_i,e_i\rangle$. Now, $\mathcal{W}_-=0$ if and only if 
$$
\begin{array}{l}
\displaystyle
\mathcal{W}(e^1\wedge e^2- \epsilon_3\epsilon_4 e^3\wedge e^4)=0,\quad
\mathcal{W}(e^1\wedge e^3+\epsilon_2\epsilon_4 e^2\wedge e^4)=0,\\
\noalign{\medskip}
\displaystyle
\mathcal{W}(e^1\wedge e^4-\epsilon_2\epsilon_3 e^2\wedge e^3)=0,
\end{array}
$$
so $\mathcal{W}(e_1,e_i)=\sigma_{ijk}\epsilon_j \epsilon_k \mathcal{W}(e_j,e_k)$ for $i,j,k\in\{2,3,4\}$ and Assertion~(i) follows.

Now, if $\{t,u,v,w\}$ is a positively oriented pseudo-orthonormal basis such that the only non-zero inner products are given by $\langle t,v\rangle=\langle u,w\rangle=1$, then 
$$
e_1=\frac{1}{\sqrt{2}}(t-v),\quad
e_2=\frac{1}{\sqrt{2}}(t+v),\quad
e_3=\frac{1}{\sqrt{2}}(w-u),\quad
e_4=\frac{1}{\sqrt{2}}(w+u),
$$
is a positively oriented orthonormal basis and Assertion~(ii) follows from the previous case.
\end{proof}

Now we move to the differentiable setting. 
Let $(M,g)$ be a four-dimensional pseudo-Riemannian manifold. $(M,g)$ is said to be \emph{half conformally flat} if its curvature tensor is half conformally flat at each point. If $(M,g)$ is Lorentzian then 
it is half conformally flat if and only if it is locally conformally flat. Hence we restrict our attention to the Riemannian and the neutral signature cases.

\subsection{Modified Riemannian extensions}\label{se:2-3}
A \emph{Walker manifold} is  a pseudo-Riemannian manifold $(M,g)$ which has a null parallel  distribution $\mathfrak{D}$, i.e., the restriction of the metric tensor to $\mathfrak{D}$ is totally degenerate and $\mathfrak{D}$ is invariant by parallel transport ($\nabla\mathfrak{D}\subset\mathfrak{D}$). 

The existence of a null $2$-plane induces an orientation on each tangent space $T_pM$ as follows. For any basis $\{ u_1,u_2\}$ of $\mathfrak{D}_p$, all basis $\{ u_1,u_2,v_1,v_2\}$  of $T_pM$ determined by $\langle u_i,v_j\rangle=\delta_{ij}$ induce the same orientation on $T_pM$ \cite{Derd}.
Thus, in  Walker coordinates $(x^1,x^2,x_{1'},x_{2'})$ where the metric tensor is written as
\[
g= 2 dx^i \circ dx_{i'} +a_{ij}\, dx^{i} \circ dx^{j},
\] 
for $a_{ij}$ functions on $M$ \cite{Walker}, the two-form $dx_{1'}\wedge dx_{2'}$ in the null parallel distribution is self-dual. This fixes the orientation of the manifold and we take
this to be the orientation of any Walker manifold throughout this work.

In this paper we are interested in a particular family of Walker manifolds: modified Riemannian extensions. In order to introduce these manifolds next we briefly review some properties of cotangent bundles (see \cite{walker-metrics} and references therein).

Let $\Sigma$ be a manifold and $T^*\Sigma$ its cotangent bundle. We express any point $\bar{p}\in T^*\Sigma$ as a pair $\bar{p} = (p,\omega)$, with $\omega$ a one-form on $T_p \Sigma$. Thus $\pi : T^*\Sigma \to \Sigma$ defined by $\pi(p,\omega)=p$ is the natural projection. For any vector field $X$ on $\Sigma$, we define  $\iota X$, the evaluation map of $X$, to be the smooth function on $T^*\Sigma$ given by $\iota X(p,\omega) = \omega(X_p)$. 
Vector fields on $T^*\Sigma$ are characterized by their action on evaluation maps. For a vector field $X$ on $\Sigma$, its complete lift $X^C$ is the vector field on $T^*\Sigma$ defined by $X^C(\iota Z) = \iota[X,Z]$, for all vector fields $Z$ on $\Sigma$. Tensor fields of type $(0,s)$ on $T^*\Sigma$ are also determined by their action on complete lifts of vector fields on $\Sigma$. 

Let $(\Sigma,D)$ be a  torsion-free affine manifold. Define the \emph{Riemannian extension} of $(\Sigma,D)$ to be the neutral signature metric $g_D$ defined on $T^*\Sigma$ and determined by $g_D(X^C,Y^C)=-\iota(D_X Y+D_Y X)$. In many situations it is useful to consider a deformation of the Riemannian extension given by $g_{D,\Phi}=g_D+\pi^*\Phi$, where $\Phi$ is a symmetric $(0,2)$-tensor field  on $\Sigma$.

For any $(1,1)$-tensor field $T$ on $\Sigma$, its evaluation $\iota T$ is a one-form on $T^*\Sigma$ characterized by $\iota T(X^C)=\iota (T(X))$. If $T$ and $S$ are $(1,1)$-tensors on $\Sigma$ and $\Phi$ is a symmetric $(0,2)$-tensor field on  $\Sigma$, then the \emph{modified Riemannian extension} is the metric on $T^*\Sigma$ given by
\begin{equation}\label{eq:metrictensorofriemannextensions}
	g_{D,\Phi,T,S} = \iota T \circ \iota S +g_D +\pi^*\Phi.
\end{equation}
Local coordinates $(x^1,\dots,x^n)$ on $\Sigma$ induce local coordinates $(x^1,\dots,x^n$, $x_{1'},\dots, x_{n'})$ on $T^*\Sigma$ so that $g_{D,\Phi,T,S}$ expresses as:
\begin{equation}\label{eq:mre}
g_{D,\Phi,T,S}= 2 dx^i \circ dx_{i'} +\left\{\frac{1}{2} x_{r'} x_{s'}(T^r_i S^s_j +T^r_j S^s_i)-2x_{k'} {}^D\Gamma_{ij}^k+\Phi_{ij}\right\}dx^i \circ dx^j, 
\end{equation}
where ${}^D\Gamma_{ij}^k$ are the Christoffel symbols of the affine connection $D$, $\Phi$ $=$ $\Phi_{ij}dx^i\otimes dx^j$, $T=T^r_idx^i\otimes\partial_{x^r}$ and $S=S^s_jdx^j\otimes \partial_{x^s}$.

Modified Riemannian extensions have been used in \cite{CL-GR-G-VL} to describe self-dual Walker manifolds as follows.

\begin{theorem}\label{thm-7.2}
A four-dimensional Walker metric is self-dual  if and only
if it is locally isometric to the cotangent bundle $T^*\Sigma$ of an affine
surface $(\Sigma,D)$, equipped with a metric tensor of the form
$$
g=\iota X(\iota \Id\circ\iota \Id)+ \iota T\circ\iota\Id +g_{D}+\pi^*\Phi\,
$$
where $X$, $T$, $D$ and $\Phi$ are a vector field, a
$(1,1)$-tensor field, a torsion-free affine connection and a
symmetric $(0,2)$-tensor field on $\Sigma$, respectively.
\end{theorem}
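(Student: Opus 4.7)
The statement is a biconditional, and I would address both directions in Walker coordinates $(x^1,x^2,x_{1'},x_{2'})$, in which the metric takes the form
\begin{equation*}
g = 2\,dx^i\circ dx_{i'} + a_{ij}\,dx^i\circ dx^j,
\end{equation*}
with the orientation fixed as in $\S$\ref{se:2-3}. The frame
\begin{equation*}
t=\partial_{x_{1'}},\quad u=\partial_{x_{2'}},\quad v=\partial_{x^1}-\tfrac12 a_{1i}\partial_{x_{i'}},\quad w=\partial_{x^2}-\tfrac12 a_{2i}\partial_{x_{i'}}
\end{equation*}
is a positively oriented pseudo-orthonormal basis of the type required by Lemma~\ref{lema:autodualidade}(ii), so throughout the proof self-duality is encoded in the three identities $\mathcal{W}(t,w,\cdot,\cdot)=0$, $\mathcal{W}(u,v,\cdot,\cdot)=0$ and $\mathcal{W}(t,v,\cdot,\cdot)=\mathcal{W}(u,w,\cdot,\cdot)$.

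\textbf{Sufficient direction.} Expanding \eqref{eq:mre} and adding the cubic correction coming from $\iota X(\iota\Id\circ\iota\Id)=X^k x_{k'}x_{i'}x_{j'}dx^i\circ dx^j$, a metric of the stated form reads
\begin{equation*}
a_{ij} = X^k x_{k'}x_{i'}x_{j'} + \tfrac12(T^r_i x_{r'}x_{j'}+T^r_j x_{r'}x_{i'}) - 2x_{k'}{}^D\Gamma_{ij}^k + \Phi_{ij}.
\end{equation*}
I would then compute the components of $\mathcal{W}$ with respect to this frame and verify the three self-duality identities directly. The calculation is routine but long; each polynomial degree in the fibre coordinates reduces to an algebraic identity, with the linear-in-fibre contributions cancelling precisely because the $-2x_{k'}{}^D\Gamma_{ij}^k$ piece corresponds to a torsion-free connection.

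\textbf{Necessary direction.} Conversely, starting from a self-dual Walker metric, I substitute the general $a_{ij}$ into $\mathcal{W}$ and impose Lemma~\ref{lema:autodualidade}(ii). Reading off the components involving the top fibre derivatives, one finds that every fourth-order partial of $a_{ij}$ in $(x_{1'},x_{2'})$ must vanish, so each $a_{ij}$ is forced to be a polynomial of total degree at most three in the fibre coordinates. Write
\begin{equation*}
a_{ij}=A_{ij}^{rst}(x)x_{r'}x_{s'}x_{t'}+B_{ij}^{rs}(x)x_{r'}x_{s'}+C_{ij}^k(x)x_{k'}+\Phi_{ij}(x).
\end{equation*}
Revisiting the self-duality conditions degree by degree, the cubic coefficient must take the form $A_{ij}^{rst}x_{r'}x_{s'}x_{t'}=X^k x_{k'}x_{i'}x_{j'}$ for a vector field $X=X^k\partial_k$ on $\Sigma$; the quadratic coefficient is the symmetrisation of some $T^r_i$; and $\Phi_{ij}$ is an arbitrary symmetric matrix of functions. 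The linear coefficient $C_{ij}^k$ is unconstrained at the self-duality level, and the remaining task is to identify it with $-2\,{}^D\Gamma_{ij}^k$ for a torsion-free affine connection on $\Sigma$. This is done by computing how $a_{ij}$ transforms under a change of Walker coordinates $(x^a,x_{a'})\mapsto(\tilde x^a,\tilde x_{a'})$: the cubic, quadratic and constant pieces transform as tensors on $\Sigma$, while the linear piece picks up exactly the inhomogeneous term of the Christoffel-symbol transformation rule, which then defines the connection $D$.

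\textbf{Main obstacle.} The technical heart of the argument is the necessity: converting the three self-duality identities into the polynomial-structure conclusion for $a_{ij}$. The Weyl tensor of a general Walker metric has to be expanded in full, and one must carefully isolate the highest fibre-derivative terms to read off the vanishing of all fourth-order partials. A secondary but conceptually subtle point is the verification that $C_{ij}^k$ transforms as an affine connection rather than as a tensor, which relies on the fact that the inhomogeneity under coordinate changes is absorbed entirely by the linear-in-fibre part. Once these two steps are settled, the identification of $X$, $T$, $D$ and $\Phi$ as geometric objects on the base surface $\Sigma=\{x_{1'}=x_{2'}=0\}$ is automatic, and the decomposition $g=\iota X(\iota\Id\circ\iota\Id)+\iota T\circ\iota\Id+g_D+\pi^*\Phi$ follows.
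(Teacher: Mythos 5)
This statement is not proved in the paper: it is Theorem~\ref{thm-7.2}, quoted verbatim from the reference \cite{CL-GR-G-VL} (with the underlying coordinate normal form going back to \cite{DR-GR-VL}), so there is no in-paper proof to compare against. Your strategy does match the one used in those sources: compute the anti-self-dual Weyl tensor of a general Walker metric in Walker coordinates, characterize $\mathcal{W}_-=0$ as a PDE system on the $a_{ij}$, integrate it to a polynomial normal form in the fibre variables, and then repackage the result as a modified Riemannian extension on $T^*\Sigma$. The choice of pseudo-orthonormal frame and the coordinate expression for $\iota X(\iota\Id\circ\iota\Id)$ are both correct.

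As a proof, however, the proposal has a genuine gap at its central step. The system $\mathcal{W}_-=0$ is a coupled second-order system that is \emph{nonlinear} (quadratic in the first derivatives of the $a_{ij}$), and the pure fibre second derivatives enter only in combinations across different components, e.g. relations of the type $\partial^2_{x_{1'}x_{1'}}a_{12}=\partial^2_{x_{1'}x_{2'}}a_{11}$. Your claim that one can ``read off'' the vanishing of all fourth-order fibre partials, conclude degree $\le 3$, and only afterwards constrain the coefficients degree by degree is asserted, not derived, and it is not how the integration actually proceeds: the degree bound and the cross-component constraints (cubic part of the special form $X^kx_{k'}x_{i'}x_{j'}$ rather than an arbitrary cubic, quadratic part a symmetrized product $\tfrac12(T^r_ix_{r'}x_{j'}+T^r_jx_{r'}x_{i'})$, linear and constant parts free) come out together from solving the coupled system. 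Without carrying out that computation — and the equally unverified ``routine but long'' check in the sufficiency direction — the argument is a plausible outline rather than a proof. A minor additional remark: since the theorem is purely local, the identification of the linear coefficients with Christoffel symbols does not require the transformation-law argument; in a fixed Walker chart one may simply define ${}^D\Gamma^k_{ij}:=-\tfrac12 C^k_{ij}$, the transformation behaviour being needed only to see that $X$, $T$, $D$, $\Phi$ are globally defined objects on $\Sigma$.
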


\section{Half conformally flat gradient Ricci almost solitons}\label{se:3}

We analyze separately the isotropic and the non-isotropic cases to prove Theorem \ref{main}.

\subsection{Non-isotropic case}\label{se:3-1}
First we will consider non-isotropic half conformally flat gradient Ricci almost solitons (i.e., $\|\nabla f\|\neq 0$). The fact that the level hypersurfaces of the potential function are non-degenerate hypersurfaces will be crucial to show that such a  soliton is necessarily locally conformally flat. Since any Riemannian gradient Ricci almost soliton is non-isotropic, the following result shows that any Riemannian half conformally flat gradient Ricci almost soliton is locally conformally flat.

\begin{theorem}\label{Th:3-1}
Let $(M,g,f)$ be a non-isotropic four-dimensional half conformally flat gradient Ricci almost soliton. Then $(M,g)$ is locally isometric to a warped product of the form $I\times_\varphi N$, where $I\subset \mathbb{R}$ and $N$ is of constant sectional curvature. Furthermore, $(M,g)$ is locally conformally flat.
\end{theorem}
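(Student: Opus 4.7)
The strategy is to first upgrade half conformal flatness to full local conformal flatness on the non-isotropic locus $\{\|\nabla f\|\neq 0\}$, and then to invoke the known structure of locally conformally flat gradient Ricci almost solitons (in the spirit of the analysis of \cite{CaMa}) to recover the warped product description $I\times_\varphi N$ with $N$ of constant sectional curvature. Thus the real work concentrates in the first step, which is where the hypothesis $\|\nabla f\|\neq 0$ enters decisively.

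To upgrade to conformal flatness, pick a point where $\nabla f$ is non-null and choose an orthonormal frame $\{e_1,e_2,e_3,e_4\}$ with $e_1$ collinear to $\nabla f$, so that $g(e_i,\nabla f)=0$ for $i=2,3,4$. Lemma~\ref{le:2-2} rewrites $\mathcal{W}(X,Y,Z,\nabla f)$ as a purely algebraic expression in the Ricci tensor, the scalar curvature and the Cotton tensor. Plugging this into the self-duality characterisation
$$
\mathcal{W}(e_1,e_i,x,y)=\sigma_{ijk}\,\epsilon_j\epsilon_k\,\mathcal{W}(e_j,e_k,x,y), \qquad i,j,k\in\{2,3,4\},
$$
of Lemma~\ref{lema:autodualidade}~(i) and specialising $y=\nabla f$, many terms collapse because only the $e_1$-slot of $\nabla f$ contributes, and one is left with a system of algebraic relations for $\rho$ at the chosen point. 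Running through $x\in\{e_1,\dots,e_4\}$ I expect the relations to force: (a) $\rho(\nabla f,e_i)=0$ for $i=2,3,4$, i.e.\ $\nabla f$ is an eigenvector of the Ricci operator; (b) $\rho(e_i,e_j)=\mu\,g(e_i,e_j)$ for $i,j\in\{2,3,4\}$, i.e.\ the restriction of $\Ric$ to $(\nabla f)^{\perp}$ is a scalar multiple of the identity; and (c) the corresponding components of the Cotton tensor vanish.

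Feeding (a)-(c) back into Lemma~\ref{le:2-2} gives $\mathcal{W}(\cdot,\cdot,\cdot,\nabla f)=0$, so every Weyl component with an $e_1$-slot vanishes in the chosen frame. A second application of Lemma~\ref{lema:autodualidade}~(i), now with $x,y\in\{e_2,e_3,e_4\}$, combined with the first Bianchi identity and the pair-exchange symmetry of $\mathcal{W}$, forces the remaining components $\mathcal{W}(e_j,e_k,e_l,e_m)$ with all indices in $\{2,3,4\}$ to vanish as well. Hence $\mathcal{W}=0$ on the open dense set $\{\|\nabla f\|\neq 0\}$, and by continuity on all of $M$. Local conformal flatness in hand, the warped product conclusion is then read off from the almost soliton equation together with (a) and (b): these make the level hypersurfaces of $f$ totally umbilical, while the vanishing of $\mathcal{W}$ combined with the Gauss equation makes them of constant intrinsic sectional curvature, yielding the local warped product structure $I\times_\varphi N$.

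The main obstacle I anticipate is the first phase: tracking the signs $\epsilon_i$ and the combinatorics of the self-duality identity when extracting (a)-(c), and verifying that in neutral signature the possible null directions inside $(\nabla f)^{\perp}$ do not cause the argument to degenerate. The choice $y=\nabla f$ is crucial, because only then does the right-hand side of Lemma~\ref{le:2-2} reduce to a tractable algebraic expression from which the Ricci rigidity can be read off; this is precisely where the non-isotropy hypothesis $\|\nabla f\|\neq 0$ is indispensable.
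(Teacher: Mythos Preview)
Your extraction of (a) and (b) from the self-duality identity with $y=\nabla f$ is exactly what the paper does, and this part of your plan is sound. The gap is in (c) and in the step ``feeding (a)--(c) back into Lemma~\ref{le:2-2} gives $\mathcal{W}(\cdot,\cdot,\cdot,\nabla f)=0$''. In dimension four the Cotton tensor is a constant multiple of $\operatorname{div}\mathcal{W}$; since $\mathcal{W}_-=0$ is parallel under $\nabla$, the divergence of $\mathcal{W}$, and hence $C$, inherits the \emph{same} self-duality relation $C(e_1,e_i,\cdot)=\sigma_{ijk}\epsilon_j\epsilon_k\,C(e_j,e_k,\cdot)$. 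Thus when you subtract the two sides of Lemma~\ref{lema:autodualidade}~(i) after applying Lemma~\ref{le:2-2}, the Cotton contributions cancel identically and you get no information about $C$. What survives is precisely the Ricci-algebraic identity that yields (a) and (b) --- this is the paper's equation obtained from the self-duality relation --- but nothing about (c). With only (a) and (b), Lemma~\ref{le:2-2} reduces to $\mathcal{W}(X,Y,Z,\nabla f)=-C(X,Y,Z)$, so proving $\mathcal{W}=0$ directly would still require $C=0$, i.e.\ control of \emph{derivatives} of the Ricci tensor, which (a)--(b) alone do not give.

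The paper avoids this obstruction by reversing your order of deductions. From (a) and (b) it reads off, via the almost soliton equation, that the level hypersurfaces of $f$ are totally umbilical and that $\operatorname{span}\{\nabla f\}$ is totally geodesic; this yields a local twisted product $I\times_\varphi N$ by \cite{ponge-reckziegel}, which the condition $\rho(E_1,E_i)=0$ upgrades to a genuine warped product by \cite{manolo-eduardo}. Local conformal flatness is then a \emph{consequence}, not an input: a self-dual warped product $I\times_\varphi N$ is automatically locally conformally flat with fiber of constant curvature \cite{bv-gr-vl-2}. So rather than trying to kill $\mathcal{W}$ componentwise (which stalls on the Cotton tensor), use your (a) and (b) to get the warped product first and let conformal flatness come for free.
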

\begin{proof}
Since we work at the local level, let $p\in M$ and orient $(M,g)$ on a neighborhood of $p$ so that it is self-dual. 
Now, since the Cotton tensor is a constant multiple of the divergence of the Weyl tensor:  
$C(x,y,z)=-2(\operatorname{div} W)(x,y,z)$, we use Lemma \ref{le:2-2} to express the self-duality condition 
$
\mathcal{W}(e_1,e_i,x,y)=\sigma_{ijk}\epsilon_j \epsilon_k \mathcal{W}(e_j,e_k,x,y)
$
of Lemma~\ref{lema:autodualidade}-(i) applied to $\nabla f$
as
\begin{equation}\label{eq:relation1}
\begin{array}{l}
\displaystyle
\tau\{g(e_i,\nabla f)e_1-g(e_1,\nabla f)e_i\} - \{\rho(e_i,\nabla f)e_1-\rho(e_1,\nabla f)e_i
\\
\noalign{\medskip}
\displaystyle
\phantom{\tau \{g(e_i,\nabla f)e_1\}}
+3g(e_i,\nabla f)\operatorname{Ric}(e_1)-3g(e_1,\nabla f)\operatorname{Ric}(e_i)\}\\
\noalign{\medskip}
\phantom{\frac{\tau}{6} \{}
\displaystyle
=\sigma_{ijk}\, \epsilon_j\epsilon_k\big(  
 \tau \{g(e_k,\nabla f)e_j-g(e_j,\nabla f)e_k\}
\\
\noalign{\medskip}
\displaystyle
\phantom{\tau \{g(e_i,\nabla f)e_1\}}
-\{\rho(e_k,\nabla f)e_j-\rho(e_j,\nabla f)e_k
\\
\noalign{\medskip}
\displaystyle
\phantom{\tau\{g(e_i,\nabla f)e_1-\}+} 
  +3 g(e_k,\nabla f)\operatorname{Ric}(e_j)-3 g(e_j,\nabla f)\operatorname{Ric}(e_k)\}
\big)
\end{array}
\end{equation}
for $i,j,k\in \{2,3,4\}$.

Since the gradient Ricci almost soliton is non-isotropic, normalize $\nabla f$ to be unit and complete it to an orthonormal frame 	
$\left\{E_1 := \frac{\nabla f}{\|\nabla f\|},E_2,E_3,E_4\right\}$.
Let $i,j,k\in\{ 2,3,4\}$ henceforth. Equation~\eqref{eq:relation1} gives rise to
\begin{equation}\label{eq:4-2}
\begin{array}{l}
-\tau g(E_1,\nabla f)g(E_i,Z)
			+3 \rho(E_i,Z)g(E_1,\nabla f)\\	
\noalign{\medskip}
\qquad +\rho(E_1,\nabla f)g(E_i,Z)-\rho(E_i,\nabla f)g(E_1,Z)\\
\noalign{\medskip}
=\sigma_{ijk}\epsilon_j \epsilon_k \{\rho(E_j,\nabla f)g(E_k,Z)-\rho(E_k,\nabla f)g(E_j,Z)\}.
		\end{array}
\end{equation}
Now, put $Z=E_1$ in \eqref{eq:4-2} to get $\rho(E_1,E_i)=0$ for all $i\in\{ 2,3,4\}$, which shows that $\nabla f$ is an eigenvector of the Ricci operator.
Next, set $Z=E_j$ in \eqref{eq:4-2} and use this fact to obtain
$$	
3\rho(E_i,E_j)g(E_1,\nabla f)
				=-\sigma_{ijk}\epsilon_j \epsilon_k\rho(E_k,\nabla f)g(E_j,E_j) =0, 
$$
from where it follows that $\rho(E_i,E_j)=0$ for all $i\neq j$.
Finally, setting $Z=E_i$ in \eqref{eq:4-2} one gets 
$$
\tau g(E_1,\nabla f)g(E_i,E_i)-3 \rho(E_i,E_i)g(E_1,\nabla f) 
				-\rho(E_1,\nabla f)g(E_i,E_i)
				= 0,
$$
and thus 
$3\epsilon_i\rho(E_i,E_i)=\tau-\epsilon_1 \rho(E_1,E_1)$.

Hence the Ricci operator $\Ric$ diagonalizes on the basis $\{E_1,\dots,E_4\}$ and, moreover, it has at most two distinct eigenvalues, one of multiplicity one corresponding to the eigenvector $E_1$.

Now, the Ricci almost soliton equation \eqref{eq:RicciSoliton} shows that 
\[
\Hes_f(E_i,E_i)=\lambda g(E_i,E_i)-\rho(E_i,E_i)=\left(\lambda-\frac{\tau-\epsilon_1 \rho(E_1,E_1)}3\right) g(E_i,E_i),
\]
and thus the level hypersurfaces of $f$ are totally umbilical. 
Since the one-dimensional distribution $\operatorname{span}\{ E_1\}$ is totally geodesic one has that $(M,g)$ decomposes locally as a twisted product $I\times_\varphi N$ (see \cite{ponge-reckziegel}). Since the Ricci tensor is diagonal (in particular $\rho(E_1,E_i)=0$) one has that the twisted product reduces to a warped product \cite{manolo-eduardo}. Finally, since $I\times_\varphi N$ is self-dual, it is necessarily locally conformally flat and the fiber $N$ is of constant sectional curvature (see \cite{bv-gr-vl-2}).
\end{proof}

\begin{remark}\label{re:non-isotropic}
\rm
Let $I\times_\varphi N=(I\times N, \epsilon dt^2\oplus g_N)$ be a warped product as in Theorem~\ref{Th:3-1}, where $\epsilon=\pm 1$ and $(N,g_N)$ has constant sectional curvature $c_N$. Since $f$, $\lambda$ and $\varphi$ are function of $t$, the almost Ricci soliton equations \eqref{eq:RicciSoliton} are obtained by direct computation (see formulas for warped products in \cite{Oneill}):
\[
\begin{array}{l}
f''(t)-\epsilon  \lambda (t)-\frac{3 \varphi ''(t)}{\varphi (t)}=0,\\
\noalign{\smallskip}
\varphi (t) \left(f'(t) \varphi '(t)-\varphi ''(t)\right)-\epsilon  \lambda (t) \varphi
   (t)^2-2 \varphi '(t)^2+2 c_N  \epsilon=0.
\end{array}
\]
From the first equation $\lambda (t)=\epsilon f''(t)-\frac{3\epsilon \varphi ''(t)}{\varphi (t)}$ and substituting in the second equation one gets:
\[
-\varphi (t)^2 f''(t)+\varphi (t) \varphi '(t) f'(t) +2 \varphi (t) \varphi''(t)-2 \varphi '(t)^2+2 c_N  \epsilon=0.
\]
Note that this is a linear ODE for the potential function $f$, so for any warped product $I\times_\varphi N$ as above there exist globally defined solutions for $f$ and thus one can construct, in general, proper gradient Ricci almost solitons. 

Locally conformally flat Riemannian gradient Ricci almost solitons were studied in \cite{Ca}, whereas examples of gradient Ricci almost solitons on warped product manifolds of the form $I\times_\varphi N$, where $(N,g_N)$ is Einstein have been constructed in \cite[Example 2.5]{PRRS}.
\end{remark}

\subsection{Isotropic case}\label{se:3-2}

In contrast with the non-isotropic case, the level hypersurfaces of the potential function are now degenerate hypersurfaces.

\begin{lemma}\label{Th:GRAS then Walker}
Let $(M,g,f)$ be a four-dimensional isotropic gradient Ricci almost soliton. If $(M,g)$ is half conformally flat, then $(M,g)$ is locally a Walker manifold and  $\lambda=\frac{1}{4}\tau$.
\end{lemma}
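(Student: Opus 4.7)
The plan is to first show that $\nabla f$ is a null eigenvector of the Ricci operator by differentiating the isotropy condition, then use the self-duality criterion of Lemma~\ref{lema:autodualidade}-(ii) via the formula of Lemma~\ref{le:2-2} to extract algebraic constraints on $\rho$ that yield both $\lambda = \frac14\tau$ and a null parallel $2$-distribution. Rewriting the soliton equation as $\nabla_X \nabla f = \lambda X - \operatorname{Ric}(X)$ and differentiating $g(\nabla f, \nabla f) \equiv 0$ yields $0 = 2g(\nabla_X \nabla f, \nabla f) = 2\lambda g(X, \nabla f) - 2\rho(X, \nabla f)$, so $\operatorname{Ric}(\nabla f) = \lambda \nabla f$; in particular $\nabla_{\nabla f}\nabla f = 0$ and the null line $\mathbb{R}\cdot \nabla f$ is Ricci-invariant.

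On a neighborhood of a point $p$ with $\nabla f_p \neq 0$, I would orient $(M,g)$ so that it is self-dual and pick a pseudo-orthonormal basis $\{t,u,v,w\}$ at $p$ as in Lemma~\ref{lema:autodualidade}-(ii) with $t = \nabla f_p$ and nonzero pairings $\langle t,v\rangle = \langle u,w\rangle = 1$. Substituting $\rho(\cdot,\nabla f) = \lambda g(\cdot,\nabla f)$ into Lemma~\ref{le:2-2} simplifies $\mathcal{W}(X,Y,Z,t)$ to
\[
-C(X,Y,Z) + \tfrac{\tau - \lambda}{6}\bigl(g(X,Z)g(Y,t) - g(X,t)g(Y,Z)\bigr) + \tfrac12\bigl(\rho(Y,Z)g(X,t) - \rho(X,Z)g(Y,t)\bigr).
\]
Applying the three self-duality conditions with $t$ in the fourth slot yields the Cotton identities
\begin{align*}
C(u,v,Z) &= \tfrac{\tau-\lambda}{6}\,g(u,Z) - \tfrac12 \rho(u,Z),\\
C(t,w,Z) &= 0,\\
C(u,w,Z) - C(t,v,Z) &= -\tfrac{\tau-4\lambda}{6}\,g(t,Z).
\end{align*}

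The crucial step is to combine these identities with the algebraic symmetries of the Cotton tensor---antisymmetry in the first pair, cyclic identity, and trace-freeness $g^{ij}C(X,e_i,e_j)=0$---and with the identity $(\nabla_X \rho)(t,Y) = d\lambda(X)g(t,Y) + g\bigl((\operatorname{Ric}-\lambda\operatorname{Id})^2 X,Y\bigr)$, obtained by differentiating $\rho(t,\cdot) = \lambda g(t,\cdot)$ using $\nabla_X t = \lambda X - \operatorname{Ric}(X)$. The target is to extract $\rho(u,u) = \rho(u,v) = 0$ and $\rho(u,w) = \lambda$; the trace relation $\tau = g^{ij}\rho_{ij} = 2\lambda + 2\rho(u,w)$ then gives $\tau = 4\lambda$, i.e., $\lambda = \tfrac14\tau$.

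With these Ricci constraints in place, the operator $S := \operatorname{Ric} - \lambda\operatorname{Id}$ maps $T_pM$ into the totally null $2$-plane $\operatorname{span}\{t,u\}$, which in turn lies in $\ker S$. Hence $\nabla_X \nabla f = -S(X) \in \operatorname{span}\{t,u\}$ for every $X$, so smoothly extending this null $2$-plane to a neighborhood of $p$ (choosing $u$ as a smooth null vector field adapted to the Ricci structure) yields the null parallel $2$-distribution required for the Walker condition. The main obstacle is the algebraic juggling in the third paragraph: tracking the numerous Cotton components and combining the cyclic/trace-free relations in the right way to pin down the specific vanishing of $\rho(u,u)$, $\rho(u,v)$ and $\rho(u,w)-\lambda$. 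A secondary subtlety is ensuring the smoothness of the null parallel distribution once its pointwise structure has been identified.
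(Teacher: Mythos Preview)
Your overall strategy---pseudo-orthonormal frame with $t=\nabla f$, apply Lemma~\ref{le:2-2} to the three self-duality relations of Lemma~\ref{lema:autodualidade}-(ii), extract Ricci constraints, then exhibit a null parallel $2$-plane---is exactly the paper's. The gap is in your third paragraph. You retain the Cotton contributions and propose to eliminate them via the antisymmetry/cyclic/trace identities of $C$ together with your formula for $(\nabla_X\rho)(t,Y)$. That formula, combined with Lemma~\ref{le:2-1}-(4), does give the pleasant fact $C(X,Y,t)=0$, but this and the purely algebraic symmetries of $C$ are far too few relations to force $\rho(u,\,\cdot\,)=\lambda\,g(u,\,\cdot\,)$ and $\tau=4\lambda$ separately; your three displayed equations then remain an underdetermined system in the unknown Cotton components and the unknown $\rho(u,\,\cdot\,)$. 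You yourself flag this step as ``the main obstacle'', and indeed it does not close.

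What you are missing is the observation the paper already invokes in the non-isotropic proof and uses again (silently) here: in dimension four $C(X,Y,Z)=-2(\operatorname{div}\mathcal W)(X,Y,Z)$, and since $\nabla$ commutes with the Hodge star, $\nabla\mathcal W$ is still self-dual in each skew pair; contracting one index therefore leaves $C$ self-dual as a $2$-form in its first two arguments. Hence
\[
C(t,v,\,\cdot\,)=C(u,w,\,\cdot\,),\qquad C(t,w,\,\cdot\,)=0,\qquad C(u,v,\,\cdot\,)=0.
\]
Feeding these into your three displayed equations kills the Cotton terms outright: the third line gives $\tau=4\lambda$ immediately, and the first line gives $\rho(u,Z)=\tfrac{\tau-\lambda}{3}g(u,Z)=\lambda\,g(u,Z)$, i.e.\ $\operatorname{Ric}(u)=\lambda u$. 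No ``algebraic juggling'' is needed. From there the Walker structure follows as in your last paragraph, though more simply than you suggest: once $\operatorname{Ric}(u)=\lambda u$, the soliton equation gives $\h_f(u)=0$; differentiating $g(u,u)=0$ and $g(u,\nabla f)=0$ then shows $\nabla_Xu\perp\operatorname{span}\{t,u\}=\mathfrak D^\perp=\mathfrak D$, so $\mathfrak D$ is parallel for any smooth pseudo-orthonormal extension of the frame---no special ``Ricci-adapted'' choice of $u$ is required.
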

\begin{proof}	
We firstly determine the structure of the Ricci operator. Since $\nabla f\neq 0$ but $g(\nabla f,\nabla f)=0$, complete it to a local pseudo-orthonormal frame $\mathcal{B}=\left\{\nabla f, U, V, W\right\}$, i.e. the only non-zero components of $g$ are $g(\nabla f,V)$ $=$ $g(U,W)$ $=$ $1$.

Since $g(\nabla f,\nabla f)=0$, one has that $0= \nabla_X g(\nabla f,\nabla f)=2g(\nabla_X \nabla f,\nabla f)=2g(\nabla_{\nabla f} \nabla f,X)$, and thus $\h_f(\nabla f)=0$, where $\h_f$ is the Hessian operator defined by $g(\h_f(X),Y)=\operatorname{Hes}_f(X,Y)$. 
Hence, it follows from the almost soliton equation \eqref{eq:RicciSoliton}  that $\nabla f$ is an eigenvector of the Ricci operator: 
$\Ric(\nabla f)=\lambda \nabla f$.
		
Now we use the self-duality characterization given in Lemma~\ref{lema:autodualidade}-$(ii)$, which with respect to the pseudo-orthonormal frame above reads as
$$
\begin{array}{l}
\mathcal{W}(\nabla f, V, X,Y)=\mathcal{W}(U,W,X,Y), \\
\noalign{\medskip}
\mathcal{W}(\nabla f,W,X,Y)=0,\qquad
\mathcal{W}(U,V,X,Y)=0,
\end{array}
$$
for any vector fields $X$ and $Y$.
Setting $Y=\nabla f$ in the first equation above and using Lemma~\ref{le:2-2} together with $\Ric(\nabla f)=\lambda \nabla f$
 one has
$$
0=\mathcal{W}(\nabla f, V, X,\nabla f)-\mathcal{W}(U,W,X,\nabla f)=
\frac{1}{6}(\tau-4\lambda)g(\nabla f,X)
$$
for any vector field $X$.
Hence $\tau=4\lambda$. 
Now, setting $Y=\nabla f$ in the third equation above: $\mathcal{W}(U,V,X,Y)=0$, it follows from Lemma~\ref{le:2-2} that 
$$
0=\mathcal{W}(U,V,X,\nabla f)=
\frac{1}{6}(\tau-\lambda)g(U,X)
-\frac{1}{2} \rho(U,X)
=\frac{1}{2}(\lambda g(U,X)
-\rho(U,X))
$$
for all $X$. This shows that $U$ is also an eigenvector of the Ricci operator associated to the eigenvalue $\lambda$: $\Ric(U)=\lambda U$. 

Finally, setting $X=V$ in the second equation above: $\mathcal{W}(\nabla f,W,X,Y)=0$, and using Lemma \ref{le:2-2} again, one has
$$
0=\mathcal{W}(V,Y,W,\nabla f)=	-\frac{1}{2}\{\lambda g(Y,W)
-\rho(Y,W) +\rho(V,W)g(Y,\nabla f)\}
$$
for all $Y$. Hence it follows that  $\rho(W,W)=\rho(\nabla f,W)=0$.

In summary, the Ricci operator expresses in the basis $\mathcal{B}=\left\{\nabla f,  U,V, W\right\}$ as 
$$
\Ric=\left(	\begin{array}{cccc}
	\lambda & 0 & \alpha & \beta\\
	0 & \lambda & \beta & 0\\
	0 & 0 & \lambda & 0\\
	0 & 0 & 0 & \lambda	
	\end{array}\right), 
$$	
where $\alpha$ and $\beta$ are functions on $M$.

Now set $\mathfrak{D}=\operatorname{span}\left\{\nabla f, U\right\}$, which is a two-dimensional null distribution.
We already showed that 
$g(\nabla_X \nabla f,\nabla f)=0$. Moreover, a similar argument using that $g(U,U)=0$ gives
$g(\nabla_X U,U)=0$ for all $X$. 
On the other hand, since  $\Ric(U)=\lambda U$, it follows from Equation~\eqref{eq:RicciSoliton}  that $\h_f(U)=0$. Now $g(U,\nabla f)=0$ shows that 
$$
g(\nabla_X U,\nabla f)=-g(U,\nabla_X\nabla f)=-\Hes_f(U,X)=0\,,
$$
for all $X$. Hence, since $\mathfrak{D}^\perp=\mathfrak{D}$ and 
$$
g(\nabla_X \nabla f,\nabla f)=0,\,\, g(\nabla_X U,U)=0,\,\, g(\nabla_X U,\nabla f)=0\,\,
\text{and}\,\,  g(U,\nabla_X\nabla f)=0
$$
we have that $\nabla \mathfrak{D}\subset\mathfrak{D}$ and $(M,g)$ is locally a Walker manifold.
\end{proof}

The choice of orientation did not play any role in our previous discussion. However, for a Walker manifold the orientation is induced by the orientation of the null two-dimensional distribution $\mathfrak{D}$. The next two lemmas show that the self-dual and the anti-self-dual conditions are not interchangeable in this context.

\begin{lemma}
\label{le:proper}
Let $(M,g)$ be a four-dimensional Walker manifold. If $(M,g,f)$ is an anti-self-dual isotropic gradient Ricci almost soliton then $(M, g, f)$ is a steady gradient Ricci soliton.
\end{lemma}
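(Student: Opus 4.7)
The plan is to mimic the strategy of Lemma~\ref{Th:GRAS then Walker}, now working with the anti-self-duality characterisation of Lemma~\ref{lema:autodualidade}-(ii). With the Walker orientation fixed by $\mathfrak{D}$, I would choose a pseudo-orthonormal frame $\{\nabla f, U, V, W\}$ with $g(\nabla f, V) = g(U, W) = 1$ and $\mathfrak{D} = \operatorname{span}\{\nabla f, U\}$, so that the anti-self-dual condition reads
\[
\mathcal{W}(\nabla f, V, X, Y) + \mathcal{W}(U, W, X, Y) = 0, \quad \mathcal{W}(\nabla f, U, X, Y) = 0, \quad \mathcal{W}(V, W, X, Y) = 0.
\]
Exactly as in the proof of Lemma~\ref{Th:GRAS then Walker}, the nullity of $\nabla f$ together with the almost-soliton equation yields $\Ric(\nabla f) = \lambda\,\nabla f$. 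Setting $Y = \nabla f$ in the first identity and applying Lemma~\ref{le:2-2}, the Cotton contributions still cancel---now by the anti-self-duality of the Cotton tensor inherited from $\mathcal{W} = \mathcal{W}_-$---and one obtains $\tau = 4\lambda$. Combining with Lemma~\ref{le:2-1}-(4) gives the first-order relation $\nabla\lambda = -\lambda\,\nabla f$, so $\lambda = C\,e^{-f}$ locally.

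The crux is to upgrade this to $\lambda \equiv 0$. For this I would exploit the remaining anti-self-dual identities. Setting $Y = \nabla f$ in $\mathcal{W}(V, W, X, Y) = 0$ via Lemma~\ref{le:2-2} gives the Cotton relation $C(V, W, X) = \tfrac12 \rho(W, X) - \tfrac{\lambda}{2} g(W, X)$; setting $Y = \nabla f$ in $\mathcal{W}(\nabla f, U, X, Y) = 0$ gives $C(\nabla f, U, X) = 0$, and choosing $(X, Y) = (V, W)$ there additionally yields $\mathcal{R}(\nabla f, U, V, W) = \lambda/3$. In parallel, the Walker property $\nabla\mathfrak{D}\subset\mathfrak{D}$ combined with the almost-soliton equation forces $\Hes_f(U, \cdot) = 0$ and hence $\rho(U, X) = \lambda\,g(U, X)$ for every $X$. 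Coupling these algebraic relations with Lemma~\ref{le:2-1}-(5) and the first-order evolution $\nabla\lambda = -\lambda\,\nabla f$ produces a differential compatibility that forces $d\lambda = 0$. Since $\nabla f$ is nowhere zero on the open set under consideration, constancy of $\lambda$ then yields $\lambda = 0$, so $(M, g, f)$ is a steady gradient Ricci soliton.

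The hard part is precisely this last step. At each point the anti-self-dual identities are algebraically compatible with $\lambda \neq 0$ (in contrast to the self-dual case, where the analogous identities lead to the rich family of Theorem~\ref{main}-(2)). The vanishing of $\lambda$ must therefore be extracted as a differential consequence: matching the Bianchi information of Lemma~\ref{le:2-1}-(5) against the anti-self-dual Weyl relations, and using the first-order equation $\nabla\lambda = -\lambda\,\nabla f$ as the crucial ingredient that turns the algebraic compatibility into the genuine forcing of $\lambda = 0$.
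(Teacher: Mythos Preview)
Your approach diverges substantially from the paper's, and the decisive step is left open.

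The paper's argument is a two-line observation: by a result of D\'\i az-Ramos, Garc\'\i a-R\'\i o and V\'azquez-Lorenzo \cite{DR-GR-VL}, any four-dimensional Walker manifold with vanishing self-dual Weyl curvature has $\tau\equiv 0$; since Lemma~\ref{Th:GRAS then Walker} already gives $\lambda=\tfrac14\tau$, one concludes $\lambda\equiv 0$ immediately. No soliton identities beyond Lemma~\ref{Th:GRAS then Walker} are needed---the vanishing of the scalar curvature is a purely geometric fact about anti-self-dual Walker metrics.

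Your strategy, by contrast, tries to force $\lambda=0$ from the almost-soliton identities and the anti-self-dual Weyl relations, and it has a genuine gap. The relation $\nabla\lambda=-\lambda\nabla f$ (equivalently $\lambda=Ce^{-f}$) that you derive uses only $\tau=4\lambda$ and $\Ric(\nabla f)=\lambda\nabla f$, both of which hold verbatim in the \emph{self-dual} case as well---where Lemma~\ref{le:3-3} exhibits many solutions with $\lambda\not\equiv 0$. So nothing up to this point distinguishes the two orientations. You acknowledge this yourself: the passage from $\lambda=Ce^{-f}$ to $d\lambda=0$ is labelled ``the hard part'' and is described only heuristically (``coupling these algebraic relations \dots\ produces a differential compatibility that forces $d\lambda=0$''). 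No actual computation is carried out, and it is not at all clear that the anti-self-dual identities $\mathcal{W}(V,W,\cdot,\nabla f)=0$ and $\mathcal{W}(\nabla f,U,\cdot,\nabla f)=0$, once expanded via Lemma~\ref{le:2-2}, yield anything beyond further components of the Ricci operator---certainly not an equation forcing $C=0$. As written, the proposal is a plan rather than a proof; the clean route is the one the paper takes, invoking the cited structural fact about Walker manifolds.
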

\begin{proof}
It was observed in \cite{DR-GR-VL} that if the self-dual Weyl curvature $W^+$ of a Walker manifold vanishes, then the scalar curvature is identically zero, and hence Lemma \ref{Th:GRAS then Walker} shows that $\lambda=0$.
\end{proof}

\begin{lemma}\label{le:3-3}
Let $(M,g)$ be a four-dimensional Walker manifold. If $(M,g,f)$ is a self-dual isotropic gradient Ricci almost soliton, then $(M,g)$ is locally isometric to the cotangent bundle $T^*\Sigma$ of an affine surface $(\Sigma,D)$ equipped with a modified Riemannian extension of the form $g_{D,\Phi,T,\Id} = \iota T\circ\iota\Id +g_D +\pi^*\Phi$. Moreover:
\begin{enumerate}
\item
The potential function satisfies $f=\pi^*\hat f$ for some smooth function $\hat f$ on $\Sigma$ and $f$ is related with the soliton function  by $\lambda=\frac32 C e^{-f}$ for a constant $C$.
\item 
The $(1,1)$-tensor field $T$ is given by $T=C e^{-\hat f}\Id$ and the $(0,2)$ symmetric tensor field  $\Phi$ is given by $\Phi=\frac{ 2}{C}e^{\hat f}(\operatorname{Hes}^D_{\hat f}+2\rho^D_{sym})$.
\end{enumerate}
\end{lemma}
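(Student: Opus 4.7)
The strategy is to combine Lemma \ref{Th:GRAS then Walker} with Theorem \ref{thm-7.2} to obtain an explicit coordinate description of $g$, and then to expand the almost soliton equation in powers of the fiber coordinates of $T^*\Sigma$, reading off the conclusions by matching the resulting graded system.

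By Lemma \ref{Th:GRAS then Walker}, $(M,g)$ is Walker, so Theorem \ref{thm-7.2} provides local coordinates $(x^i,x_{i'})$ in which
\[
g = \iota X(\iota\Id\circ\iota\Id) + \iota T\circ\iota\Id + g_D + \pi^*\Phi
\]
for some vector field $X$, $(1,1)$-tensor $T$, torsion-free connection $D$ and symmetric $(0,2)$-tensor $\Phi$ on a surface $\Sigma$, with null parallel distribution $\mathfrak{D}=\operatorname{span}\{\partial_{x_{1'}},\partial_{x_{2'}}\}$. Since the proof of Lemma \ref{Th:GRAS then Walker} places $\nabla f$ inside $\mathfrak{D}$, computing the gradient in these coordinates using the block form of the inverse Walker metric forces $\partial_{x_{i'}}f=0$, so $f=\pi^*\hat f$ for some $\hat f\in C^\infty(\Sigma)$.

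Next, by (\ref{eq:mre}) together with the extra $X$-contribution from Theorem \ref{thm-7.2}, the components $a_{ij}$ of $g$ are polynomial in $x_{k'}$ of degree at most three, with cubic, quadratic, linear and constant parts governed respectively by $X$, $T$, $D$ and $\Phi$. Consequently $\rho(\partial_{x^i},\partial_{x^j})$, $\operatorname{Hes}_f(\partial_{x^i},\partial_{x^j})$ and $\lambda g(\partial_{x^i},\partial_{x^j})$ are polynomial in $x_{k'}$, and the soliton equation $\operatorname{Hes}_f+\rho=\lambda g$ decomposes, upon matching coefficients of monomials in $x_{k'}$, into a graded system of tensorial equations on $\Sigma$. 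The cubic component forces $X=0$, reducing $g$ to the modified Riemannian extension $\iota T\circ\iota\Id+g_D+\pi^*\Phi$. The quadratic component, combined with the descent $f=\pi^*\hat f$ and the identity $\lambda=\tfrac14\tau$ from Lemma \ref{Th:GRAS then Walker}, reduces the $(1,1)$-tensor equation for $T$ to the scalar conditions $T=c\,\Id$ with $c=C\,e^{-\hat f}$ and $\lambda=\tfrac32 C\,e^{-\hat f}$ for some constant $C$. Finally, the linear component yields $\Phi=\tfrac{2}{C}e^{\hat f}(\operatorname{Hes}^D_{\hat f}+2\rho^D_{sym})$, while the constant-in-$x_{k'}$ component serves as a consistency check.

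The main obstacle is the cubic-in-$x_{k'}$ analysis: controlling the cubic part of $\rho$ in the presence of a non-zero $X$ and showing it cannot be absorbed into $\lambda g$ requires a careful curvature computation that crucially uses $\nabla f\in\mathfrak{D}$ together with the fact that $\hat f$ depends only on base coordinates. A related delicate point is the reduction of the quadratic equation to $T=c\,\Id$: this relies on the self-duality already built into the form of Theorem \ref{thm-7.2}, combined with the eigenstructure of the Ricci operator derived in Lemma \ref{Th:GRAS then Walker}, to exclude the possibility that $T$ is a more general $(1,1)$-tensor.
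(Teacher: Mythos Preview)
Your overall strategy --- apply Theorem \ref{thm-7.2}, then expand the almost soliton equation in the fiber coordinates $x_{k'}$ and match coefficients --- is exactly the paper's. Your observation that $\nabla f\in\mathfrak{D}$ forces $\partial_{x_{i'}}f=0$ directly (hence $f=\pi^*\hat f$) is in fact cleaner than what the paper does: the paper only obtains $f=\iota\xi+\pi^*\hat f$ from the $(\partial_{x_{i'}},\partial_{x_{j'}})$-components of the soliton equation and must then separately eliminate $\xi$. Your shortcut is valid provided you take care to apply Theorem \ref{thm-7.2} to the Walker distribution constructed in the proof of Lemma \ref{Th:GRAS then Walker} (the one actually containing $\nabla f$), rather than to an arbitrary null parallel plane.

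Where your sketch goes wrong is the bookkeeping of which graded piece delivers which constraint. The paper does \emph{not} kill $X$ via the cubic part of the base--base components $(\partial_{x^i},\partial_{x^j})$; it uses the \emph{mixed} components $(\operatorname{Hes}_f+\rho-\tfrac14\tau g)(\partial_{x^i},\partial_{x_{j'}})$, which in coordinates adapted to $X=\partial_{x^1}$ contain an uncancellable $x_{1'}$. The same mixed components, once $X=0$, force $T=\phi\,\Id$; the relation $\phi=Ce^{-\hat f}$ then comes from the \emph{linear}-in-$x_{k'}$ part of the base--base components, and $\Phi$ is determined by their \emph{constant} part --- not the linear part, with nothing left over as a ``consistency check''. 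So your scheme (cubic $\to X$, quadratic $\to T$, linear $\to\Phi$, constant $\to$ check) omits the mixed components entirely and misaligns the base--base grading by one; if you tried to execute it literally you would not recover the constraints cleanly.
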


\begin{proof}
It was shown in \cite{CL-GR-G-VL} that a four dimensional Walker manifold is self-dual if and only
if it is locally isometric to the cotangent bundle $T^*\Sigma$ of an affine
surface $(\Sigma,D)$, with metric tensor
\begin{equation}
\label{eq:3-4-1}
g=\iota X(\iota \Id\circ\iota \Id)+ \iota T\circ\iota\Id +g_{D}+\pi^*\Phi\,
\end{equation}
where $X$, $T$, $D$ and $\Phi$ are a vector field, a
$(1,1)$-tensor field, a torsion-free affine connection and a
symmetric $(0,2)$-tensor field on $\Sigma$, respectively. 
Now a direct calculation of the almost soliton equation \eqref{eq:RicciSoliton} gives:
\begin{equation}
\label{eq:3-4-2}
\begin{array}{rcl}
(\operatorname{Hes}_f +\rho -\lambda g)(\partial_{x_{1'}},\partial_{x_{1'}}) &=&\partial^2_{x_{1'}x_{1'}}f =0,\\
\noalign{\medskip}
(\operatorname{Hes}_f +\rho -\lambda g)(\partial_{x_{1'}},\partial_{x_{2'}}) &=&\partial^2_{x_{1'}x_{2'}}f=0,\\
\noalign{\medskip}
(\operatorname{Hes}_f +\rho -\lambda g)(\partial_{x_{2'}},\partial_{x_{2'}}) &=&\partial^2_{x_{2'}x_{2'}}f =0,
\end{array}
\end{equation}
which shows that the potential function $f$ of any gradient Ricci almost soliton must be of the form
$f=\iota\xi+\pi^*\hat{f}$ for some vector field $\xi$ on $\Sigma$ and some smooth function $\hat{f}$ on $\Sigma$.

We firstly show that no gradient Ricci almost soliton may exist if the vector field $X$ in \eqref{eq:3-4-1} does not vanish. Assume $X\neq 0$ at some point $p\in\Sigma$ and specialize local coordinates on $\Sigma$ so that $X=\partial_{x^1}$. 
Let $T=T_i^j dx^i\otimes\partial_{x^j}$ and $\xi=\xi^l\partial_{x^l}$ be the local expressions of the tensor field $T$ and the vector field $\xi$.  Then a straightforward calculation using that $\lambda=\frac{1}{4}\tau$ shows that 
$$
\begin{array}{l}
(\operatorname{Hes}_f +\rho -\frac{1}{4}\tau g)(\partial_{x^{1}},\partial_{x_{1'}}) =
\frac{1}{4}\left\{  2(T_1^1-T_2^2+2(x_{1'}+\partial_{x^1}\xi^1) \right.\\
\noalign{\medskip}
 \phantom{(\operatorname{Hes}_f +\rho -\lambda g)(\partial_{x^{1}}}
-2\xi^1(3 x_{1'}^2 +2 x_{1'}T_1^1+x_{2'}T_1^2-2{}^D\Gamma_{11}^1)\\
\noalign{\medskip}
 \phantom{(\operatorname{Hes}_f +\rho -\lambda g)(\partial_{x^{1}}}
\left. +\xi^2(4 x_{1'} x_{2'} + 2 x_{1'}T_2^1+x_{2'}(T_1^1+T_2^2)-4{}^D\Gamma_{12}^1)
 \right\},
\end{array}
$$
where ${}^D\Gamma_{ij}^k$ are the Christoffel symbols of the affine connection $D$.
This is a polynomial in the variables $x_{1'}$, $x_{2'}$, so all coefficientes must vanish and one has that $\xi=0$. The previous expression reduces to
$$
(\operatorname{Hes}_f +\rho -\frac{1}{4}\tau g)(\partial_{x^{1}},\partial_{x_{1'}}) =
x_{1'}+\frac{1}{2}(T_1^1-T_2^2).
$$
Since the above expression is not identically zero, we conclude that there do not exist gradient Ricci almost solitons if $X$ does not vanish.

Hence any self-dual metric \eqref{eq:3-4-1} reduces to 
\begin{equation}
\label{eq:3-4-3}
g_{D,\Phi,T,\Id}=\iota T\circ\iota\Id+g_{D}+\pi^*\Phi\,
\end{equation}
for some $(1,1)$-tensor field $T$, an affine connection $D$ and a
symmetric $(0,2)$-tensor field $\Phi$ on $\Sigma$. Moreover, Equation \eqref{eq:3-4-2} shows that the potential function of any Ricci almost soliton must be of the form $f=\iota\xi+\pi^*\hat{f}$ for some vector field $\xi$ on $\Sigma$ and $\hat{f}\in\mathcal{C}^\infty(\Sigma)$.
In order to show that the potential function $f$ is the pull-back of a smooth function $\hat f$ on $\Sigma$, assume the vector field $\xi\neq 0$ at some point $p\in\Sigma$. Let $(x^1,x^2)$ be adapted local coordinates so that $\xi=\partial_{x^1}$ and hence $f=x_{1'}+\pi^*\hat{f}$. 
Computing again expressions of the almost soliton equation one has
$$
\begin{array}{l}
\!\!(\operatorname{Hes}_f +\rho -\frac{1}{4}\tau g)(\partial_{x^{1}},\partial_{x_{1'}})\!=
\frac{1}{2}\{(1-2x_{1'})T_1^1-x_{2'}T_1^2-T_2^2 \}+{}^D\Gamma_{11}^1, \\
\noalign{\medskip}
\!\!(\operatorname{Hes}_f +\rho -\frac{1}{4}\tau g)(\partial_{x^{2}},\partial_{x_{2'}})\!=
 \frac{1}{2}\{(4-2x_{1'})T_2^2-x_{2'}T_1^2-(1+x_{1'})T_1^1\}+{}^D\Gamma_{12}^2,
\end{array}
$$
it follows that $T_1^1=T_1^2=T_2^2=0$. Since the scalar curvature of any metric given by \eqref{eq:3-4-3} satisfies $\tau=3(T_1^1+T_2^2)=3\tr\{ T\}$, one has that $\tau=0$ and thus $\lambda=0$, which shows that $(M,g,f)$ is a steady gradient Ricci soliton. These were studied in \cite{BV-GR}.

Hence in what follows set $M=T^*\Sigma$ with metric given by \eqref{eq:3-4-3}, $f=\pi^*\hat f$ and $\lambda =\frac{3}{4}\tr \{T\}$. Once more, considering the almost soliton equation one has 
$$
\begin{array}{l}
(\operatorname{Hes}_f +\rho -\frac{1}{4}\tau g)(\partial_{x^{1}},\partial_{x_{1'}}) =
\frac{1}{2}(T_1^1-T_2^2) =0,\\
\noalign{\medskip}
(\operatorname{Hes}_f +\rho -\frac{1}{4}\tau g)(\partial_{x^{1}},\partial_{x_{2'}}) = T_1^2 =0,
\\
\noalign{\medskip}
(\operatorname{Hes}_f +\rho -\frac{1}{4}\tau g)(\partial_{x^{2}},\partial_{x_{1'}}) = T_2^1 =0,
\end{array}
$$
from where it follows that $T$ is a multiple of the identity. Set $T=\phi(x^1,x^2)\Id$
for some smooth function $\phi$ on $\Sigma$. 
Moreover, a explicit calculation of the components of the almost soliton equation shows that they are polynomials on the fiber coordinates of the form
$$
\begin{array}{l}
(\operatorname{Hes}_f +\rho -\frac{1}{4}\tau g)(\partial_{x^{1}},\partial_{x^1}) =
\frac{2}{3}x_{1'}(\phi\partial_{x^1}\hat f+\partial_{x^1}\phi) + \cdots,\\
\noalign{\medskip}
(\operatorname{Hes}_f +\rho -\frac{1}{4}\tau g)(\partial_{x^{2}},\partial_{x^2}) = 
\frac{2}{3}x_{1'}(\phi\partial_{x^2}\hat f+\partial_{x^2}\phi) + \cdots\,.
\end{array}
$$
Hence $\phi=Ce^{-\hat f}$ and the potential function $f=\pi^*\hat f$ determines the $(1,1)$-tensor field $T$ as $T=C e^{-\hat f}\Id$, which shows that the potential function of the soliton $f$ and the soliton function $\lambda$ are related by $\lambda=\frac32 C e^{-\pi^*\hat f}$.

Finally, a long but straightforward calculation shows that
$$
\begin{array}{l}
(\operatorname{Hes}_f +\rho -\frac{1}{4}\tau g)(\partial_{x^{i}},\partial_{x^j}) =
\frac{C}2 e^{-\hat f}\Phi_{ij}-(\operatorname{Hes}^D_{\hat f}+2\rho^D_{sym})(\partial_{x^{i}},\partial_{x^j}) 
\end{array}
$$
for $i,j\in\{ 1,2\}$, where $\operatorname{Hes}^D_{\hat f}=Dd\hat f$ is the Hessian tensor on $(\Sigma,D)$ and $\rho^D_{sym}(X,Y)=\frac{1}{2}\{\rho^D(X,Y)+\rho^D(Y,X)\}$ is the symmetric part of the Ricci tensor of $(\Sigma,D)$. Now the result follows.
\end{proof}

\begin{remark}\label{re:3-5}
\rm
The previous lemma not only gives the local structure of self-dual isotropic gradient Ricci almost solitons, but also provides a construction method for steady traceless $\kappa$-Einstein solitons. 

Indeed, for any affine connection $D$ and any given function $\hat f$ on $\Sigma$, consider the $(1,1)$-tensor field $T$ and the symmetric $(0,2)$-tensor field $\Phi$ defined by
$$
T=C e^{-\hat f}\Id,\qquad \Phi=\frac{2}{C}e^{\hat f}(\operatorname{Hess}^D_{\hat f}+2\rho^D_{sym}).
$$
Then the modified Riemannian extension $g_{D,\Phi,T,\Id}$ is a neutral signature metric on $T^*\Sigma$ with scalar curvature $\tau= 6Ce^{-\pi^*\hat f}$ and
$(T^*\Sigma$, $g_{D,\Phi,T,\Id}$, $f=\pi^*\hat f$) becomes a  gradient Ricci almost soliton with $\lambda=\frac{1}{4}\tau$.
\end{remark}

\begin{remark}\label{re:3-6}
\rm
Let $(\Sigma,D)$ be an affine surface and $\hat f$ a constant function on $\Sigma$. Then $\operatorname{Hess}^D_{\hat f}=0$ and the modified Riemannian extension in Remark~\ref{re:3-5} is Einstein. This fact was already observed in \cite[Theorem 2.1]{CL-GR-G-VL}, but the construction given in Remark \ref{re:3-5} generalizes it to the context of gradient Ricci almost solitons.
\end{remark}

\begin{remark}\label{re:3-7}\rm
Self-dual gradient Ricci solitons were investigated in \cite{BV-GR} showing that non-conformally flat examples may only occur in the isotropic case. Such a soliton is necessarily steady and it is locally isometric to the cotangent bundle of an affine surface $(\Sigma, D)$ equipped with a Riemannian extension $g_{D,\Phi}$. Moreover, the potential function $f$ is obtained as the pull-back of a function $\hat f$ on $\Sigma$ which must satisfy an affine gradient Ricci soliton equation: $\operatorname{Hess}^D_{\hat f}+2\rho^D_{sym}=0$.

The existence of affine gradient Ricci solitons is a restrictive condition. For instance, a compact homogeneous affine surface admits an affine gradient Ricci soliton if and only if the Ricci tensor is of rank one (see \cite{BVGRG}). However, any affine surface gives rise to a proper gradient Ricci almost soliton on its cotangent bundle.

In opposition to the proper almost soliton case considered in this paper, the soliton case $(\lambda=0)$ is independent of the deformation tensor $\Phi$ but the potential function $f=\pi^*\hat f$ is restricted by the affine gradient Ricci soliton equation. Recall that in the proper almost soliton case, Theorem \ref{main} shows that while the deformation tensor $\Phi$ is completely determined, there is no restriction on the potential function since $f=\pi^*\hat f$ for arbitrary $\hat f\in\mathcal{C}^\infty(\Sigma)$.
\end{remark}


\begin{thebibliography}{99}
\bibitem{BBR-2014}
A. Barros, R. Batista, and E. Ribeiro Jr.,
Compact almost Ricci solitons with constant scalar curvature are gradient,
\emph{Monatsh. Math.} \textbf{174} (2014), 29--39.

\bibitem{BBR--2012}
A. Barros, R. Batista, and E. Ribeiro Jr.,
Rigidity of gradient almost Ricci solitons,
\emph{Illinois J. Math.} \textbf{56} (2012), 1267--1279.

\bibitem{Barros-Ribeiro12} 
A. Barros and E. Ribeiro Jr., 
Some characterizations for compact almost Ricci solitons, 
\emph{Proc. Amer. Math. Soc.} \textbf{140} (2012), 1033--1040.


\bibitem{BV-GR}
M. Brozos-V\'azquez and E. Garc\'\i a-R\'\i o,
Four-dimensional neutral signature self-dual gradient Ricci solitons,
arXiv:1410.8654 [math.DG].

\bibitem{BVGRG}
M. Brozos-V\'azquez, E. Garc\'\i a-R\'\i o, and P. Gilkey,
Homogeneous affine surfaces: affine Killing vector fields and Gradient Ricci solitons,
arXiv:1512.05515 [math.DG].

\bibitem{walker-metrics}
M. Brozos-V\'{a}zquez, E. Garc\'{i}a-R\'{i}o, P. Gilkey, S. Nik\v{c}evi\'{c}, and R. V\'{a}zquez-Lorenzo,
\emph{The geometry of Walker manifolds},
Synthesis Lectures on Mathematics and Statistics \textbf{5}, 
Morgan \& Claypool Publ., Williston, VT, 2009.

\bibitem{bv-gr-vl-2}
M. Brozos-V\'azquez, E. Garc\'\i a-R\'\i o, and R. V\'azquez-Lorenzo, 
Osserman and conformally Osserman manifolds with warped and twisted product structure,
\emph{Results Math.} \textbf{52} (2008), no. 3-4, 211--221.


\bibitem{CL-FL-GR-VL}
E. Calvi\~no-Louzao, M. Fern\'andez-L\'opez, E. Garc\'\i a-R\'\i o, and R. V\'azquez-Lorenzo,
Homogeneous Ricci almost solitons,
\emph{Israel J. Math.}, to appear.

\bibitem{CL-GR-G-VL}
E. Calvi\~no-Louzao, E. Garc\'\i a-R\'\i o, P. Gilkey, and R. V\'azquez-Lorenzo,
The geometry of modified Riemannian extensions, 
\emph{Proc. R. Soc. Lond. Ser. A Math. Phys. Eng. Sci.} \textbf{465} (2009), 2023--2040. 

\bibitem{CCDMM}
G. Catino, L. Cremaschi, Z. Djadli, C. Mantegazza, and L. Mazzieri,
The Ricci-Bourguignon flow, 
arXiv:1507.00324 [math.DG].

\bibitem{CMMR}
G. Catino, P. Mastrolia, D. D. Monticelli, and M. Rigoli,
On the geometry of gradient Einstein-type manifolds,
arXiv:1402.3453 [math.DG]. 

\bibitem{CaMa}
G. Catino, and L. Mazzieri,
Gradient Einstein solitons, 
\emph{Nonlinear Anal.} \textbf{132} (2016), 66--94.

\bibitem{Ca}
G. Catino,
Generalized quasi-Einstein manifolds with harmonic Weyl tensor,
\emph{Math. Z.} \textbf{271} (2012),751--756.


\bibitem{Derd}
A. Derdzinski, 
Non-Walker self-dual neutral Einstein four-manifolds of Petrov type III,
\emph{J. Geom. Anal.} \textbf{19} (2009), 301--357.

\bibitem{DR-GR-VL}
J. C. D\'\i az-Ramos, E. Garc\'\i a-R\'\i o, and R. V\'azquez-Lorenzo,
Four-dimensional Osserman metrics with nondiagonalizable Jacobi operators, 
\emph{J. Geom. Anal.} \textbf{16} (2006), 39--52.

\bibitem{manolo-eduardo}
M. Fern\'andez-L\'opez, E. Garc\'\i a-R\'\i o, D. N. Kupeli, and B. \"Unal,
A curvature condition for a twisted product to be a warped product,
\emph{Manuscripta math.} \textbf{106} (2001), 213--217.

\bibitem{Ma}
G. Maschler,
Almost soliton duality,
\emph{Adv. Geom.} \textbf{15} (2015), 159--166.

\bibitem{Oneill}
B. O'Neill,
\emph{Semi-Riemannian geometry. With applications to relativity.} 
Pure and Applied Mathematics, \textbf{103}, 
Academic Press, Inc., New York, 1983.

\bibitem{PRRS}
S. Pigola, M. Rigoli, M. Rimoldi, and A. Setti, 
Ricci almost solitons,
\emph{Ann. Sc. Norm. Super. Pisa Cl. Sci. (5)} \textbf{10} (2011), 757--799.


\bibitem{ponge-reckziegel}
R. Ponge and H. Reckziegel, 
Twisted products in pseudo-Riemannian geometry, 
\emph{Geom. Dedicata} \textbf{48} (1993), 15--25.

\bibitem{Walker}
A. G. Walker, Canonical form for a Riemannian space with
a parallel field of null planes, {\it Quart. J. Math. Oxford} {\bf
(2) 1} (1950), 69--79.

\end{thebibliography}
\end{document}